\theoremstyle{plain}%
\newtheorem{theorem}{Theorem}
\newtheorem{assumption}[theorem]{Assumption}%
\newtheorem{lemma}[theorem]{Lemma} 
\newtheorem{remark}{Remark}%
\numberwithin{equation}{section}
\newcommand{\R}{\mathbb{R}} 
\DeclareMathOperator*{\argmin}{arg\,min} 
\newcommand{\grad}{\nabla} 
\renewcommand{\t}[1]{\widetilde{#1}} 
\newcommand{\bigO}{\mathcal{O}} 
\newcommand{\thmref}[1]{Theorem~\ref{#1}}
\newcommand{\secref}[1]{Section~\ref{#1}}
\newcommand{\figref}[1]{Figure~\ref{#1}}
\newcommand{\tabref}[1]{Table~\ref{#1}}
\newcommand{\lemref}[1]{Lemma~\ref{#1}}
\newcommand{\revision}[1]{#1}
\newcommand{\rerevision}[1]{#1}
\begin{document}

\title{Analyzing Inexact Hypergradients for Bilevel Learning}

\author{
Matthias J. Ehrhardt\thanks{Department of Mathematical Sciences, University of Bath, Bath, BA2 7AY, UK (\texttt{m.ehrhardt@bath.ac.uk})}
\and 
Lindon Roberts\thanks{School of Mathematics and Statistics, University of Sydney, Camperdown NSW 2006, Australia (\texttt{lindon.roberts@sydney.edu.au}).}}

\maketitle

\begin{abstract}
Estimating hyperparameters has been a long-standing problem in machine learning. We consider the case where the task at hand is modeled as the solution to an optimization problem. Here the exact gradient with respect to the hyperparameters cannot be feasibly computed and approximate strategies are required. 
We introduce a unified framework for computing hypergradients that generalizes existing methods based on the implicit function theorem and automatic differentiation/backpropagation, showing that these two seemingly disparate approaches are actually tightly connected.
Our framework is extremely flexible, allowing its subproblems to be solved with any suitable method, to any degree of accuracy.
We derive a priori and computable a posteriori error bounds for all our methods, and numerically show that our a posteriori bounds are usually more accurate.
Our numerical results also show that, surprisingly, for efficient bilevel optimization, the choice of hypergradient algorithm is at least as important as the choice of lower-level solver.
\end{abstract}

\textbf{Keywords:} Hyperparameter Optimization; Bilevel Optimization; Automatic Differentiation.
\\

\section{Introduction}
In this work we consider the hyperparameter tuning problem framed as a bilevel optimization problem \cite{Dempe2015bilevel, DelosReyes2021, Crockett2021bilevel} where we aim to solve
\begin{subequations} \label{eq_general_problem}
\begin{align}
	&\min_{\theta\in\R^n} F(\theta) := \frac 1 m \sum_{i=1}^{m} f_i(x_i^*(\theta)) + r(\theta) \label{eq_upper_level} \\
	\text{s.t.} \quad &x_i^*(\theta) := \argmin_{x\in\R^d} g_i(x, \theta),  \qquad  \forall i=1,\ldots,m, \label{eq_x_theta}
\end{align}
\end{subequations}
where the lower-level functions $g_i$ are smooth in $x$ and $\theta$ and strongly convex in $x$, and the upper-level functions $f_i$ and $r$ are smooth but possibly nonconvex.
Our main motivation for studying \eqref{eq_general_problem} is the problem of supervised bilevel learning, where we may have $f_i(x) = \|x-x^\dagger_i\|^2$ for example, where $x^\dagger_i$ is the desired outcome of the lower-level problem \eqref{eq_x_theta}.

Problems of the form~\eqref{eq_general_problem} are ubiquitous in every aspect of science and tasks such as clustering, time series analysis and image reconstruction can be modeled as such. As a simple example \rerevision{with $n=1$}, we could \rerevision{think of} $\theta \geq 0$ as a choice of regularization weight suitable for a family of regularized regression problems $g_i$, i.e. $g_i(x, \theta) = \frac12 \|A x - y_i\|^2 + \theta R(x)$ \rerevision{for $i=1,\ldots,m$}. Similarly one can select a regularizer~\cite{Kunisch2013bilevel} or noise model~\cite{DeLosReyes2013}. Other models use many more parameters \rerevision{like} an input-convex neural \rerevision{network} as a regularizer \cite{Amos2017inputconvex, Mukherjee2020inputconvex} or \rerevision{the} sampling of the forward operator for image compression \cite{Hoeltgen2013compression, Chen2014} or MRI~\cite{Sherry2020sampling}.

When the number of parameters \rerevision{$n$} is small, the problem \eqref{eq_general_problem} can be efficiently solved by search methods (e.g.\revision{,} \cite{McKay1979, Bergstra2012}) or derivative-free approaches, see\revision{, e.g.,} \cite{Hutter2011, Snoek2012} for general hyperparameter search and \cite{Ehrhardt2021} for bilevel learning.

Motivated by bilevel learning, we are interested in algorithms which can scale to millions of parameters (or more), and so consider \eqref{eq_general_problem} with first-order methods used for solving the lower-level problem \eqref{eq_x_theta} and upper-level problem \eqref{eq_upper_level}.
If the \rerevision{functions $f_i$ and $g_i$ in \eqref{eq_upper_level} and \eqref{eq_x_theta} respectively are} sufficiently smooth, it is well-known (see\revision{, e.g.,} \cite{Bengio2000}) that \rerevision{the} gradients of the upper-level objective \rerevision{\eqref{eq_upper_level}} (also called hypergradients) can be computed via 
\begin{align}
    \grad (f_i \circ x_i^*)(\theta) = \partial x_i^*(\theta)^T \grad f_i(x_i^*(\theta)), \qquad i=1,\ldots,m, \label{eq_desired_grad_calc}
\end{align}
where $\partial x_i^*(\theta)$ is the derivative of the minimizer $x_i^*(\theta)$ with respect to the parameters $\theta$.
However, in the context of large-scale problems and general lower-level objectives $g_i$, access to the true lower-level minimizers $x_i^*(\theta)$ is unreasonable and so \eqref{eq_desired_grad_calc} cannot be evaluated.

The primary purpose of this work is to develop methods for efficiently evaluating $\grad (f_i \circ x_i^*)$. 
Such methods compute an approximate lower-level solution to be used in some form to then compute an approximation to the hypergradient~\eqref{eq_desired_grad_calc}. 
While this approach has been often used successfully in practice, the interplay between the accuracy \revision{of the computed approximation to $x_i^*(\theta)$} and its impact on the hypergradients are neither fully understood nor fully utilized.

In this work we introduce a unified framework for computing approximate hypergradients, which come with numerous concrete implementations and several error bounds suitable for different purposes.
Our main theoretical results are:
\begin{itemize}
    \item Showing that two promising hypergradient estimation algorithms, based on the \revision{implicit} function theorem \cite{Pedregosa2016,Zucchet2022} and inexact automatic differentiation (AD)/backpropagation \cite{Mehmood2020} respectively, are essentially the same underlying algorithm. This surprising result allows us to unify two very disparate hypergradient estimation methodologies into a general framework.
    \item Deriving general error bounds for the underlying general hypergradient estimation framework. Our error bounds are both a priori, based on known convergence rates of the constituent algorithms, and a posteriori, yielding computable error estimates suitable for use inside a bilevel optimization framework.
\end{itemize}
Our numerical results then compare the accuracy and efficiency of different hypergradient estimation techniques, as well as studying the impact of hypergradient algorithms on the performance of bilevel optimization routines.
Importantly, and perhaps surprisingly, our numerical evidence shows that, in the context of bilevel optimization, \emph{the choice of hypergradient algorithm is at least as important as the choice of lower-level solver}.

\subsection{Existing \rerevision{work}}
An explicit form for the derivative of the minimizer of a smooth, strongly convex \rerevision{optimization problem} is given by the classical \revision{implicit} function theorem (e.g.\revision{,}~\cite{Bengio2000}).
This formulation, where the resulting linear system \rerevision{of equations} is solved inexactly using the conjugate gradient (CG) method, was used as the basis of the Hyperparameter Optimization with Approximate Gradient (HOAG) algorithm for bilevel optimization in \cite{Pedregosa2016}.
The analysis of the underlying hypergradient estimation was refined in the more recent work \cite{Zucchet2022}.
Our unified approach is fundamentally based on these ideas, but we demonstrate how this approach also incorporates AD-based methods and have a more refined error analysis.

The other hypergradient estimation technique we consider is reverse-mode \rerevision{automatic differentiation}, also known as backpropagation, of a lower-level iterative solver.
Originally, AD for iterative methods was first applied to fixed point iterations \cite{Christianson1994}.
This was extended more recently in \cite{Mehmood2020} to parametric strongly convex optimization using gradient descent and heavy ball momentum.
Here, the authors prove \revision{\rerevision{sub-optimal} linear} convergence rates of hypergradients using AD, and introduce an `inexact' AD method with improved linear convergence rates.
Our analysis unifies the accelerated `inexact' approach from \cite{Mehmood2020} with the traditional analysis from \cite{Pedregosa2016,Zucchet2022}, introduces a new family of a posteriori bounds, and carefully considers the numerical performance of these options.

Both these perspectives (implicit function theorem and backpropagation) were considered separately in \cite{Grazzi2020a} where the lower-level problem \eqref{eq_x_theta} is replaced with a fixed-point iteration, and (separate) a priori bounds are derived in both cases.
Our work is similar in considering both techniques, but we give a unified perspective showing how they can be seen as the same method, and hence unified results can be shown.
We also extend the analysis by providing computable a posteriori bounds, and showing that these are often tighter in practice.

More generally, several approaches exist for solving the general bilevel problem \eqref{eq_general_problem} (instead of considering specifically the estimation of hypergradients).
This typically involves \revision{alternating between} a given number of iterations of a solver for the lower-level problem \eqref{eq_x_theta} with a (possibly different) number of iterations of an upper-level solver, both typically first-order methods.
In \cite{Ghadimi2018}, the number of lower-level gradient descent iterations is pre-specified. In \cite{Ji2021}\revision{, the} authors use a constant number of lower-level iterations for each upper-level iteration. Alternatively, \cite{Hong2020} uses just one iteration but with different upper- vs.~lower-level stepsizes.

When the lower-level problem is replaced by a finite algorithm, then gradients can be computed exactly, see\revision{, e.g.,} \cite{Ochs2015, Maclaurin2015, Shaban2019}. This is not the case here as the lower-level solution can only be approximated by the limit of an algorithm and thus requires special care.


%
%


In \cite{Suonpera2022} the authors propose an alternative optimality system including the upper-level unknown, the lower-level unknown and the adjoint state which corresponds to the derivative of the lower-level unknown with respect to the upper-level parameters.

Both the upper- and lower-level problem can be extended to stochastic optimization problems with only stochastic gradients available, see\revision{, e.g.,} \cite{Grazzi2020, Ji2021}. While this is clearly a challenging research question itself, it is fairly independent of the challenge considered in the present paper.

\subsection{Contributions}
In this work we introduce a unified perspective on hypergradient estimation, incorporating the different techniques analyzed in \cite{Pedregosa2016,Zucchet2022,Grazzi2020a,Mehmood2020}.
This includes estimates of the hypergradient based on the implicit function theorem, with inexact linear solves using iterative methods such as CG, and gradient descent and heavy ball variants of the inexact AD method from \cite{Mehmood2020} (which converge faster than standard AD).
We show that all of these approaches correspond to the same basic procedure: approximately solve the lower-level problem and then approximately solve the resulting implicit function theorem linear system.
As far as we are aware, showing that inexact AD is the same as using the implicit function theorem is a new result showing a surprising connection between analytic and symbolic gradient estimation.

With our resulting unified method, we prove a priori bounds similar to those in \cite{Pedregosa2016,Zucchet2022,Grazzi2020a}, where first-order methods applied to both subproblems yields a linear convergence rate.
Our analysis is extremely general: it enables both subproblems to be solved with any suitable algorithm, and run for any number of iterations or up to any stopping tolerance.

We then extend our analysis to prove a posteriori bounds on our flexible hypergradient estimators.
Here we construct explicit and computable error bounds on the hypergradient.
These bounds are potentially useful if we wish to apply existing algorithms for nonconvex optimization with inexact gradients to solve~\eqref{eq_general_problem}, for example  frameworks such as \cite{Berahas2021,Cao2022}.

We then study different variants of our hypergradient algorithm numerically.
First we use a simple linear least-squares problem to demonstrate the correctness of our bounds, and show that our new a posteriori analysis typically provides stronger bounds than the more common a priori analysis (as well as being computable in practice).
We then consider a data hypercleaning problem, comparing different combinations of algorithms for the lower-level solver and the implicit function theorem linear solver.
Unsurprisingly, the better the algorithms used (e.g.\revision{,}~heavy ball rather than gradient descent), the faster the optimization progresses.
However perhaps surprisingly, we show that the choice of hypergradient algorithm is at least as important as the choice of lower-level solver for the purposes of efficient bilevel optimization, demonstrating the importance of the hypergradient estimation problem.
Lastly, we conclude by applying our method to the bilevel problem of \rerevision{finding} a neural net regularizer for image denoising, and demonstrate that the \rerevision{obtained} regularizer can outperform the \revision{standard total variation regularizer}. 

The paper is structured as follows: in \secref{sec_background} we summarize the existing \revision{implicit} function theorem and inexact backpropagation theory. In \secref{sec_unified_alg} we prove that inexact backpropagation is just a special case of the \revision{implicit} function theorem \rerevision{approach} which motivates our unified hypergradient framework.
We prove the a priori and a posteriori error bounds for our approach in \secref{sec_bounds} and give numerical results in \secref{sec_numerics}.

\subsection{Notation}


Throughout, we will use $\|\cdot\|$ to be the Euclidean norm of vectors and operator 2-norm of matrices.
Both partial and total derivatives are denoted by $\partial$. If the function we take the derivative of is scalar-valued, then we denote its derivative by $\nabla$ and refer to it as the gradient. 
Specifically, given \eqref{eq_general_problem} we have
\begin{itemize}
    \item $\partial_y g_i:\R^d\times \R^n \to \R^d$ and $\partial_{yy} g_i:\R^d\times \R^n \to \R^{d\times d}$ are the gradient and Hessian (respectively) of $g_i(y,\theta)$ with respect to $y$ (for fixed $\theta$). That is, $[\partial_y g_i(y,\theta)]_j := \frac{\partial g_i(y,\theta)}{\partial y_j}$ and $[\partial_{yy} g_i(y,\theta)]_{j,k} := \frac{\partial^2 g_i(y,\theta)}{\partial y_j \partial y_k}$;
    \item $\partial_{y}\partial_{\theta} g_i:\R^d\times \R^n\to \R^{d\times n}$ is the Jacobian of $\partial_y g_i(y,\theta)$ with respect to $\theta$, where $[\partial_{y}\partial_{\theta} g_i(y,\theta)]_{j,k} :=\frac{\partial^2 g_i(y,\theta)}{\partial y_j \partial \theta_k}$;
    \item $\partial x_i^*:\R^n\to\R^{d\times n}$ is the derivative of $x_i^*(\theta)$ with respect to $\theta$, with $[\partial x_i^*(\theta)]_{j,k} := \frac{\partial [x_i^*(\theta)]_j}{\partial \theta_k}$; 
    \item $\grad f_i:\R^d\to\R^d$ is the gradient of $f_i(x)$ with respect to $x$ (or $\theta$ for $\grad F$ and $\grad r$).
\end{itemize}
At various points throughout we will drop the indexing on $i$ and explicit dependencies on $\theta$ for simplicity of presentation.

\revision{Note that this framework also handles complex-valued inputs and operators (with real-valued objectives to ensure optimization over an ordered field), by treating the real and imagingary parts of any complex quantities as two real-valued quantities.}

\section{Background} \label{sec_background}
We begin by outlining three existing approaches for calculating the hypergradient
\begin{align}
    \grad F(\theta) = \frac{1}{m} \sum_{i=1}^{m} \grad (f_i \circ x_i^*)(\theta) + \grad r(\theta) = \frac{1}{m}\sum_{i=1}^{m} \partial x_i^*(\theta)^T \grad f_i(x_i^*(\theta)) + \grad r(\theta) \label{eq_hypergradient},
\end{align}
where the second equality follows from the chain rule.\footnote{Our analysis is also suitable for the more general case where $f_i$ has an explicit dependence on $\theta$, i.e.~$f_i(x_i^*(\theta), \theta)$, but we use our simpler formulation for ease of presentation.}
In particular, we are concerned with how we may accurately compute \revision{$\grad F(\theta)$} in the setting where $x_i^*(\theta)$ cannot be exactly determined.
For $\partial x_i^*$ to be well-defined, we require the following assumptions on $g_i$.

\begin{assumption} \label{ass_smoothness}
	For each $i=1,\ldots,m$, the lower-level objective $g_i$ is twice continuously differentiable in $y$ and there exists $0<\mu_i(\theta)\leq L_i(\theta)$ such that $\mu_i(\theta) I \preceq \partial_{yy} g_i(y,\theta) \preceq L_i(\theta) I$ for all $y$\revision{, where $A\preceq B$ means $B-A$ is positive semidefinite}.
	Furthermore, each of $g_i$, $\partial_y g_i$ and $\partial_{yy} g_i$ are continuous in $\theta$.
\end{assumption}

\revision{In practice, Assumption~\ref{ass_smoothness} may require the domain of $\theta$ to be restricted, which \rerevision{means} more careful selection of the upper-level solver \rerevision{is required}, but \rerevision{this} does not affect the results below regarding the inexact computation of $\grad F(\theta)$.}

It is well known (see\revision{, e.g.,} \cite{Bengio2000, Ehrhardt2021}) that under Assumption~\ref{ass_smoothness}, the map $\theta \mapsto x_i^*(\theta)$ is continuously differentiable with derivative $\partial x_i^*(\theta) = D_i(x_i^*(\theta),\theta)^T$. This formulation relies on the function
\begin{align}
    D_i(y,\theta) := -B_i(y,\theta)^T A_i(y,\theta)^{-1} \in \R^{n\times d} \label{eq_ift_basic}
\end{align}
and associated matrices 
\begin{align}
    A_i(y,\theta) := \partial_{yy} g_i(y,\theta) \in \R^{d\times d} \qquad \text{and} \qquad B_i(y,\theta) := \partial_y \partial_{\theta} g_i(y,\theta) \in \R^{d\times n}. 
\end{align}
Note that Assumption~\ref{ass_smoothness} implies that $A_i(y, \theta)$ is invertible and thus $D_i$ is well-defined. Moreover, for all $y$ and $\theta$ it holds that
\begin{align} \label{eq:bound:Ainverse}
\|A_i(y, \theta)^{-1}\| \leq \mu_i(\theta)^{-1}.
\end{align}

\begin{remark} \label{rem_drop_theta}
From here, we drop the indexation by $i$ and the explicit dependence on $\theta$ (since $\theta$ does not vary throughout) for simplicity of notation. That is, we will now write $A(y)$, $x^*$ and $\mu$ instead of $A_i(y,\theta)$, $x_i^*(\theta)$ and $\mu_i(\theta)$, for example.
\end{remark}

Inserting \eqref{eq_ift_basic} into \eqref{eq_hypergradient} and ignoring the average and $r$ for now, leads to another formulation of the hypergradient 
\begin{align}
    h^* := - B(x^*)^T A(x^*)^{-1} \grad f(x^*) = D(x^*) \grad f(x^*). \label{eq_ift}
\end{align}
This formulation of the hypergradient is not practical: it is too costly to compute to high accuracy for most relevant applications since $x^*$ has to be computed iteratively (via a suitable strongly convex solver) and a system of linear equations the size of the number of lower-level unknowns (often exceeding millions in imaging applicatons) has to be solved. Therefore, we investigate methods which acknowledge \revision{the} fact that both computations cannot or should not be carried out accurately. Particularly, we focus on methods which are independent of the specific \revision{algorithm} for solving the lower-level problem \eqref{eq_x_theta}, and so instead we assume that we have an approximate minimizer $x_{\varepsilon}$ of \eqref{eq_x_theta}, such that
\begin{align}
    \|x_{\varepsilon} - x^*\| \leq \varepsilon. \label{eq_lower_inexact_solve}
\end{align}
Finally, we will use the following smoothness assumptions for the results below.

\begin{assumption} \label{ass_smoothness_extra}
$A$ (resp.~$B$) is $L_A$- (resp.~$L_B$-) Lipschitz continuous in $y$, uniformly for all $\theta$.
\end{assumption}

\begin{assumption} \label{ass_smoothness_upper}
$\nabla f$ is Lipschitz continuous with constant $L_{\nabla f}$.
\end{assumption}

\subsection{\revision{Implicit} Function Theorem Approach}


Given \eqref{eq_lower_inexact_solve} and \eqref{eq_ift}, a natural approximation is 
\begin{align}
    h^* \approx h_{\varepsilon} := -B(x_{\varepsilon})^T A(x_{\varepsilon})^{-1} \grad f(x_{\varepsilon}).
\end{align}
For large-scale problems, full matrix inversion/linear solves is impractical and so iterative methods are preferred.
Given that $A(x_{\varepsilon})$ is symmetric positive definite by Assumption~\ref{ass_smoothness}, we consider using the conjugate gradient method (CG).
The symmetry of $A$ gives two possible approaches:
\begin{itemize}
    \item Approximately solve $A(x_{\varepsilon})^{-1} \grad f(x_{\varepsilon})$ and pre-multiply by $-B(x_{\varepsilon})^T$
    \item Approximately solve $B(x_{\varepsilon})^T A(x_{\varepsilon})^{-1} = [A(x_{\varepsilon})^{-1} B(x_{\varepsilon})]^T $ and post-multiply by $\grad f(x_{\varepsilon})$.
\end{itemize}
Of these, the former is preferable as the linear solve has only one right-hand side, reducing the total number of matrix-vector products.
This leads to a natural algorithm to compute the hypergradient, given in Algorithm~\ref{alg:ift+cg}, which we refer to as \revision{IFT+CG}, relating to its association with the implicit function theorem.

\begin{algorithm}[tb]
\caption{IFT+CG to compute gradient estimate $h_{\varepsilon, \delta}$}\label{alg:ift+cg}
\begin{algorithmic}[1]
\Require tolerances $\varepsilon,\delta>0$
\State Find an approximate solution $x_{\varepsilon}$ satisfying \eqref{eq_lower_inexact_solve}.
\State Using CG, find $q_{\varepsilon,\delta}$ satisfying
    \begin{align}
        \|A(x_{\varepsilon}) q_{\varepsilon,\delta} - \grad f(x_{\varepsilon})\| \leq \delta. \label{eq_H_system2_resid}
    \end{align}
\State Compute gradient estimate $h_{\varepsilon, \delta} := -B(x_{\varepsilon})^T q_{\varepsilon,\delta}$.
\end{algorithmic}
\end{algorithm}


The first analysis of IFT+CG in the context of bilevel optimization was in \cite{Pedregosa2016}, which gives the following.
\begin{theorem}[Theorem 1, \cite{Pedregosa2016}] \label{thm_ift_v1}
    Suppose Assumptions~\ref{ass_smoothness}, \ref{ass_smoothness_extra} and \ref{ass_smoothness_upper} hold, and  $\varepsilon$ is sufficiently small\footnote{Specifically, the proof requires that $\sum_k \varepsilon_k < \infty$ and we take $k$ sufficiently large, where $\varepsilon_k$ is the value of $\varepsilon$ chosen in iteration $k$ of the bilevel optimization.}. 
    Then the error produced by IFT+CG with $\delta=\varepsilon$ satisfies $\|h_{\varepsilon,\varepsilon}-h^*\| = \bigO(\varepsilon)$.
\end{theorem}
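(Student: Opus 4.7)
The plan is to split the error via an intermediate quantity and bound each piece by $\bigO(\varepsilon)$. Introduce
\begin{align*}
h_{\varepsilon} := -B(x_{\varepsilon})^T A(x_{\varepsilon})^{-1} \grad f(x_{\varepsilon}),
\end{align*}
so that $h_{\varepsilon,\varepsilon}-h^* = (h_{\varepsilon,\varepsilon}-h_{\varepsilon}) + (h_{\varepsilon}-h^*)$, separating the CG inexactness from the lower-level inexactness.

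For the first piece, apply $A(x_\varepsilon)^{-1}$ to the CG residual bound \eqref{eq_H_system2_resid} (with $\delta=\varepsilon$) and invoke \eqref{eq:bound:Ainverse} to get $\|q_{\varepsilon,\varepsilon} - A(x_\varepsilon)^{-1} \grad f(x_\varepsilon)\| \leq \varepsilon/\mu$. Since $B$ is Lipschitz (Assumption~\ref{ass_smoothness_extra}), $\|B(x_\varepsilon)\| \leq \|B(x^*)\| + L_B \varepsilon$ is uniformly bounded for $\varepsilon$ small, which gives $\|h_{\varepsilon,\varepsilon}-h_\varepsilon\| \leq \|B(x_\varepsilon)\| \cdot \varepsilon/\mu = \bigO(\varepsilon)$.

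For the second piece, I would telescope the product to isolate the differences in $B$, $A^{-1}$ and $\grad f$:
\begin{align*}
h_\varepsilon - h^* &= -\bigl(B(x_\varepsilon)-B(x^*)\bigr)^T A(x_\varepsilon)^{-1} \grad f(x_\varepsilon) \\
&\quad - B(x^*)^T \bigl(A(x_\varepsilon)^{-1}-A(x^*)^{-1}\bigr) \grad f(x_\varepsilon) \\
&\quad - B(x^*)^T A(x^*)^{-1} \bigl(\grad f(x_\varepsilon)-\grad f(x^*)\bigr).
\end{align*}
The Lipschitz bounds $\|B(x_\varepsilon)-B(x^*)\|\leq L_B \varepsilon$ and $\|\grad f(x_\varepsilon)-\grad f(x^*)\|\leq L_{\grad f} \varepsilon$ (Assumptions~\ref{ass_smoothness_extra} and \ref{ass_smoothness_upper}) handle the outer differences directly. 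For the middle term, I use the resolvent identity
\begin{align*}
A(x_\varepsilon)^{-1}-A(x^*)^{-1} = A(x_\varepsilon)^{-1} \bigl(A(x^*)-A(x_\varepsilon)\bigr) A(x^*)^{-1},
\end{align*}
combined with \eqref{eq:bound:Ainverse} and Lipschitz continuity of $A$, to obtain $\|A(x_\varepsilon)^{-1}-A(x^*)^{-1}\| \leq L_A \varepsilon / \mu^2$. Since $\|\grad f(x_\varepsilon)\|$ is bounded uniformly for $\varepsilon$ small (by Lipschitz continuity again), all three terms collect into $\bigO(\varepsilon)$ contributions.

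The calculation is essentially routine once this decomposition is in place; the only genuine subtlety is ensuring that the ambient norms $\|B(x_\varepsilon)\|$, $\|\grad f(x_\varepsilon)\|$ and $\|A(x_\varepsilon)^{-1}\|$ are dominated by constants \emph{independent of $\varepsilon$}. This is exactly where the sufficiently-small-$\varepsilon$ hypothesis enters: it confines $x_\varepsilon$ to a neighborhood of $x^*$ on which the Lipschitz-type assumptions deliver uniform bounds, while $\|A(x_\varepsilon)^{-1}\|\leq \mu^{-1}$ holds globally by Assumption~\ref{ass_smoothness}. No further iteration-summability (as in the footnoted original proof) is needed for this single-evaluation statement.
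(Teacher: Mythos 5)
Your argument is correct, and it is essentially the route the paper itself takes: the statement is quoted from \cite{Pedregosa2016} without an in-paper proof, but the paper's own a priori analysis (Theorem~\ref{thm:apriori}, built from Lemmas~\ref{lem:bound:linsys}--\ref{lem_lip_Ainv}) uses exactly your decomposition --- split off the linear-solve error via $\|A(x_\varepsilon)^{-1}\|\leq\mu^{-1}$, then telescope $B^T A^{-1}\grad f$ using Lipschitz continuity of $B$ and $\grad f$ together with the resolvent identity for $A^{-1}$ --- and with $\delta=\varepsilon$ this gives the stated $\bigO(\varepsilon)$ bound. Your closing remark that neither smallness of $\varepsilon$ nor summability across iterations is needed for this single-evaluation bound also matches the paper's observation that its Section~\ref{sec_bounds} analysis removes that restriction.
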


This result was strengthened in the recent work \cite{Zucchet2022}, which considers IFT but with any method for solving \eqref{eq_H_system2_resid}, not just CG.

\begin{theorem}[Theorem 3.1.2, \cite{Zucchet2022}] \label{thm_ift_v2}
    Suppose Assumptions~\ref{ass_smoothness}, \ref{ass_smoothness_extra} and \ref{ass_smoothness_upper} hold, and $\varepsilon < \frac{\mu}{2\max\revision{\{L_A,L_B\}}}$.
    Then the error produced by IFT+CG \revision{satisfies} $\|h_{\varepsilon,\delta}-h^*\| \leq C(\varepsilon+\delta)$ for some constant $C$. 
\end{theorem}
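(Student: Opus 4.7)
The plan is to introduce the exact-linear-solve intermediate
\[
\tilde h_\varepsilon := -B(x_\varepsilon)^T A(x_\varepsilon)^{-1} \nabla f(x_\varepsilon),
\]
split the error via the triangle inequality
\[
\|h_{\varepsilon,\delta} - h^*\| \;\leq\; \|h_{\varepsilon,\delta} - \tilde h_\varepsilon\| \;+\; \|\tilde h_\varepsilon - h^*\|,
\]
and bound the two pieces separately. The first piece is the linear-solve error, the second is the perturbation in the hypergradient caused by replacing $x^*$ with $x_\varepsilon$.

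For the first piece, write
\[
h_{\varepsilon,\delta} - \tilde h_\varepsilon = -B(x_\varepsilon)^T\bigl(q_{\varepsilon,\delta} - A(x_\varepsilon)^{-1}\nabla f(x_\varepsilon)\bigr) = -B(x_\varepsilon)^T A(x_\varepsilon)^{-1}\bigl(A(x_\varepsilon) q_{\varepsilon,\delta} - \nabla f(x_\varepsilon)\bigr).
\]
The residual bound \eqref{eq_H_system2_resid} gives the last factor norm $\leq\delta$. For $A(x_\varepsilon)^{-1}$ I would use that, by Assumption~\ref{ass_smoothness_extra}, $\|A(x_\varepsilon)-A(x^*)\|\leq L_A\varepsilon$, and since $A(x^*)\succeq \mu I$ the hypothesis $\varepsilon<\mu/(2\max\{L_A,L_B\})$ ensures $A(x_\varepsilon)\succeq (\mu/2)I$, hence $\|A(x_\varepsilon)^{-1}\|\leq 2/\mu$. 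Finally $\|B(x_\varepsilon)\|\leq \|B(x^*)\|+L_B\varepsilon$ is uniformly bounded under the same smallness hypothesis on $\varepsilon$. This yields $\|h_{\varepsilon,\delta}-\tilde h_\varepsilon\|\leq C_1\delta$.

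For the second piece I would use the standard three-term product decomposition. Writing $A_\varepsilon := A(x_\varepsilon)$, $A^* := A(x^*)$ and similarly for $B$ and $\nabla f$,
\[
\tilde h_\varepsilon - h^* = -(B_\varepsilon-B^*)^T A_\varepsilon^{-1}\nabla f_\varepsilon \;-\; B^{*T}(A_\varepsilon^{-1}-A^{*-1})\nabla f_\varepsilon \;-\; B^{*T} A^{*-1}(\nabla f_\varepsilon - \nabla f^*).
\]
Assumption~\ref{ass_smoothness_extra} bounds $\|B_\varepsilon-B^*\|\leq L_B\varepsilon$, and Assumption~\ref{ass_smoothness_upper} gives $\|\nabla f_\varepsilon-\nabla f^*\|\leq L_{\nabla f}\varepsilon$. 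For the middle term I would apply the resolvent identity $A_\varepsilon^{-1}-A^{*-1}=A_\varepsilon^{-1}(A^*-A_\varepsilon)A^{*-1}$, which combined with $\|A_\varepsilon^{-1}\|\leq 2/\mu$, $\|A^{*-1}\|\leq 1/\mu$ from \eqref{eq:bound:Ainverse} and $\|A_\varepsilon-A^*\|\leq L_A\varepsilon$ gives an $O(\varepsilon)$ bound. Boundedness of $\|\nabla f_\varepsilon\|$, $\|B^*\|$, and $\|\nabla f^*\|$ follows from Lipschitz continuity and fixed values at $x^*$. Summing the three contributions gives $\|\tilde h_\varepsilon-h^*\|\leq C_2\varepsilon$, and the theorem follows with $C := \max\{C_1,C_2\}$.

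The routine-but-delicate step is bookkeeping the constants so that they depend only on $\mu$, $L_A$, $L_B$, $L_{\nabla f}$, $\|B^*\|$ and $\|\nabla f^*\|$ and not on $\varepsilon$ or $\delta$; the one genuinely substantive step is invoking the smallness hypothesis $\varepsilon<\mu/(2\max\{L_A,L_B\})$ at exactly the right moment, namely to guarantee that $A(x_\varepsilon)$ remains uniformly positive definite (and $B(x_\varepsilon)$ uniformly bounded) so that all the inverse-norm estimates go through. Once that uniform coercivity of $A_\varepsilon$ is in hand, the rest is a textbook perturbation argument.
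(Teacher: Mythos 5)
Your proof is correct and takes essentially the same route as the paper's own analysis: the paper only cites this result from \cite{Zucchet2022}, but its Lemmas \ref{lem:bound:linsys}--\ref{lem_lip_Ainv} and the proof of Theorem \ref{thm:apriori} use exactly your decomposition---a linear-solve term bounded by $\|B(x_\varepsilon)\|\,\|A(x_\varepsilon)^{-1}\|\,\delta$ plus a three-term perturbation of $-B^T A^{-1}\nabla f$ controlled by the Lipschitz constants and the resolvent identity. The only noteworthy difference is that you spend the hypothesis $\varepsilon<\mu/(2\max\{L_A,L_B\})$ to obtain $\|A(x_\varepsilon)^{-1}\|\le 2/\mu$ by perturbing from $x^*$, whereas Assumption~\ref{ass_smoothness} already gives $\|A(y)^{-1}\|\le 1/\mu$ for every $y$ via \eqref{eq:bound:Ainverse}, which is precisely why the paper's Theorem~\ref{thm:apriori} can drop the restriction on $\varepsilon$ altogether.
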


Our analysis in \secref{sec_bounds} improves on Theorems~\ref{thm_ift_v1} and~\ref{thm_ift_v2} by removing the restriction on $\varepsilon$ and making all constants explicit.

\subsection{Inexact Automatic Differentiation}
We now consider an alternative approach for hypergradient estimation based on automatic differentiation (backpropagation), as developed in \cite{Mehmood2020}.

The motivation is to consider solving the lower-level problem to find $x_{\varepsilon}$ \eqref{eq_lower_inexact_solve} by applying a first-order method to \eqref{eq_x_theta} such as gradient descent
\begin{align}
	x^{(k+1)} = x^{(k)} - \alpha \nabla g(x^{(k)}), \label{eq_x_gd}
\end{align}
or Polyak's heavy ball momentum
\begin{align}
	x^{(k+1)} = x^{(k)} - \alpha \nabla g(x^{(k)}) + \beta (x^{(k)} - x^{(k-1)}). \label{eq_x_hb}
\end{align}
If we suppose that $K$ iterations of either \eqref{eq_x_gd} or \eqref{eq_x_hb} are run from a fixed starting point $x^{(0)}$, then reverse-mode automatic differentiation (AD) with respect to $\theta$ applied to \eqref{eq_x_hb} gives
: initialize $\t{x}^{(K)} := \grad f(x^{(K)})$, $\t{x}^{(K+1)}:=0\in\R^d$ and $h^{(0)}:=0\in\R^n$, then iterate
\begin{subequations} \label{eq_hb_exact_ad}
\begin{align}
    h^{(k+1)} &= h^{(k)} - \alpha B(x^{(K-k-1)})^T \t{x}^{(K-k)}, \\
    \t{x}^{(K-k-1)} &= \t{x}^{(K-k)} - \alpha A(x^{(K-k-1)}) \t{x}^{(K-k)} + \beta(\t{x}^{(K-k)} - \t{x}^{(K-k+1)}),
\end{align}
\end{subequations}
for $k=0,\ldots,K-1$. 
For gradient descent \eqref{eq_x_gd}, the same iteration \eqref{eq_hb_exact_ad} holds but with $\beta=0$.
In both cases, the final gradient estimator is $h^{(K)}$.

The key insight of \cite{Mehmood2020} is that \eqref{eq_hb_exact_ad} can be made to converge faster\footnote{Linearly \revision{with an improved rate}.} (and with fewer Hessian/Jacobian evaluations) by replacing $B(x^{(K-k)-1})$ and $A(x^{(K-k)-1})$ with the final Jacobian and Hessian, $B(x^{(K)})$ and $A(x^{(K)})$ for all $k$.
This motivates the inexact automatic differentiation (IAD) methods IAD+GD and IAD+HB given in Algorithms~\ref{alg:iad+gd} and~\ref{alg:iad+hb} respectively.

\begin{algorithm}[tb]
\caption{IAD+GD (fixed $K$) to compute gradient estimate $h^{(K)}$}\label{alg:iad+gd}
\begin{algorithmic}[1]
\Require iteration count $K$ and stepsize $\alpha$ \Comment{\revision{(note: this represents a code comment)}}
\State run $K$ iterations of \eqref{eq_x_gd} to get $x^{(K)} \approx x^*$. 
\State initialize  $\t{x}^{(0)} := \grad f(x^{(K)})$ and $h^{(0)} = 0 \in \R^{n}$
\State \textbf{for} $k=0,\ldots,K-1$ \Comment{$K$ iterations of \emph{inexact} reverse-mode AD}
    \begin{subequations} \label{eq_gd_inexact_ad}
    \begin{align}
	    h^{(k+1)} &= h^{(k)} - \alpha B(x^{(K)})^T \t{x}^{(k)}, \\
	    \t{x}^{(k+1)} &= \t{x}^{(k)} - \alpha A(x^{(K)}) \t{x}^{(k)}.
    \end{align}
\end{subequations}
\end{algorithmic}
\end{algorithm}
%
%
\begin{algorithm}[h!]
\caption{IAD+HB (fixed $K$) to compute gradient estimate $h^{(K)}$}\label{alg:iad+hb}
\begin{algorithmic}[1]
\Require iteration count $K$\revision{,} stepsize $\alpha$\revision{,} and momentum parameter $\beta$
\State run $K$ iterations of \eqref{eq_x_hb} to get $x^{(K)} \approx x^*$.
\State initialize  $\t{x}^{(0)} := \grad f(x^{(K)})$, $\t{x}^{(-1)}=0\in\R^d$ and $h^{(0)} = 0 \in \R^{n}$
\State \textbf{for} $k=0,\ldots,K-1$ \Comment{$K$ iterations of \emph{inexact} reverse-mode AD}
\begin{subequations} \label{eq_hb_inexact_ad}
    \begin{align}
	    h^{({k+1})} &= h^{(k)} - \alpha B(x^{(K)})^T \t{x}^{(k)}, \\
	    \t{x}^{(k+1)} &= \t{x}^{(k)} - \alpha A(x^{(K)}) \t{x}^{(k)} + \beta(\t{x}^{(k)} - \t{x}^{(k-1)}). \label{eq_hb_inexact_ad_2}
    \end{align}
\end{subequations}
\end{algorithmic}
\end{algorithm}

The main result from \cite{Mehmood2020} is the following.
\begin{theorem}[Propositions 10 \& 17, \cite{Mehmood2020}] \label{thm_inexact_ad_orig}
    Suppose Assumptions~\ref{ass_smoothness} and \ref{ass_smoothness_extra} hold. Further assume that $\|B(y)\| \leq B_{\max}$ for each $y$.\footnote{Technically, \cite{Mehmood2020} also requires that $B_{\max}$ does not depend on $\theta$ and that the bound holds uniformly for all $\theta$.
    }
    Then:
    \begin{itemize}
        \item If $\alpha \leq 1/L$, then there exists $\lambda_{GD}\in[0,1)$ such that IAD+GD (fixed $K$) gives a hypergradient estimate $h^{(K)}$ satisfying
        \begin{align}
            \|h^{(K)} - D(x^{(K)}) \grad f(x^{(K)})\| \leq \lambda_{GD}^K \frac{B_{\max}}{\mu} \|\grad f(x^{(K)})\|.
        \end{align}
        The optimal rate\footnote{\revision{Here, $L$ and $\mu$ denote $L(\theta)$ and $\mu(\theta)$, c.f.~Remark~\ref{rem_drop_theta}.}} $\lambda_{GD}^* = (L-\mu)/(L+\mu)$ is attained if $\alpha=2/(L+\mu)$.
        \item If $\beta\in[0,1)$, $\alpha \leq 2(1+\beta)/L$ and $\gamma>0$, then there exists $\lambda_{HB}\in[0,1)$ such that IAD+HB (fixed $K$) gives a hypergradient estimate $H_K$ satisfying
        \begin{align}
            \|h^{(K)} - D(x^{(K)}) \grad f(x^{(K)})\| \leq c(\lambda_{HB}+\gamma)^K \frac{B_{\max}}{\mu} \|\grad f(x^{(K)})\|,
        \end{align}
        for some constant $c>0$.
        The optimal rate $\lambda_{HB}^* = (\sqrt{L}-\sqrt{\mu})/(\sqrt{L}+\sqrt{\mu})$ is attained if $\alpha=4/(\sqrt{L}+\sqrt{\mu})^2$ and $\beta=(\lambda_{HB}^*)^2$.
    \end{itemize}
\end{theorem}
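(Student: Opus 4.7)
My approach is to unroll both the forward and backward recursions in closed form. Because $A(x^{(K)})$ and $B(x^{(K)})$ are frozen throughout all $K$ backward iterations (this is the defining feature of \emph{inexact} AD, as opposed to the exact AD of \eqref{eq_hb_exact_ad}), each recursion in $\t{x}^{(k)}$ is a linear autonomous system, so $\t{x}^{(k)}$ admits an explicit formula in terms of a fixed matrix. Substituting into the $h$ update and summing then presents $h^{(K)}$ as a truncation of a (matrix) geometric series whose full sum equals $D(x^{(K)}) \grad f(x^{(K)}) = -B(x^{(K)})^T A(x^{(K)})^{-1} \grad f(x^{(K)})$; the tail of the series is exactly the residual I need to bound.

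For IAD+GD, the closed form is immediate: writing $A := A(x^{(K)})$ and $B := B(x^{(K)})$, iteration \eqref{eq_gd_inexact_ad} gives $\t{x}^{(k)} = (I-\alpha A)^k \grad f(x^{(K)})$, and hence
\begin{equation*}
h^{(K)} = -\alpha B^T \sum_{k=0}^{K-1} (I-\alpha A)^k \grad f(x^{(K)}) = -B^T \bigl(I - (I-\alpha A)^K\bigr) A^{-1} \grad f(x^{(K)}),
\end{equation*}
by the standard matrix geometric-sum identity. Subtracting $D(x^{(K)}) \grad f(x^{(K)}) = -B^T A^{-1} \grad f(x^{(K)})$ leaves a residual of $B^T (I-\alpha A)^K A^{-1} \grad f(x^{(K)})$, which is bounded via submultiplicativity by $B_{\max} \|I-\alpha A\|^K \mu^{-1} \|\grad f(x^{(K)})\|$, using the hypothesis $\|B\|\leq B_{\max}$ together with \eqref{eq:bound:Ainverse}. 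The conclusion then follows since $\|I-\alpha A\|\leq 1-\alpha\mu < 1$ whenever $\alpha \leq 1/L$, and the standard minimization over $\alpha$ attains $(L-\mu)/(L+\mu)$ at $\alpha = 2/(L+\mu)$.

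For IAD+HB the same template applies, but the $\t{x}$ recursion is now second order. I lift it to a $2d$-dimensional first-order system by setting $w^{(k)} := (\t{x}^{(k)}, \t{x}^{(k-1)})$, so that $w^{(k+1)} = M w^{(k)}$ for the block companion matrix $M$ with blocks $(1+\beta)I - \alpha A$, $-\beta I$, $I$, $0$. Since $A$ is symmetric, I orthogonally diagonalize $A = Q\Lambda Q^T$ and conjugate $M$ by the corresponding block change of basis; this decouples the lifted recursion into $d$ independent $2\times 2$ scalar subsystems, one per eigenvalue $\lambda_i \in [\mu,L]$. Each such subsystem has characteristic polynomial $t^2 - (1+\beta-\alpha\lambda_i)t + \beta$, whose roots have modulus $\sqrt{\beta}$ precisely when $(1+\beta-\alpha\lambda_i)^2 \leq 4\beta$; the stated conditions on $\alpha$ and $\beta$ make this hold uniformly in $\lambda_i \in [\mu,L]$, giving spectral radius $\sqrt{\beta}$. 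Applying the same geometric-sum manipulation (with $M$ in place of $I-\alpha A$) and then the bounds $\|A^{-1}\|\leq \mu^{-1}$ and $\|B\|\leq B_{\max}$ yields the claimed hypergradient error estimate.

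The main obstacle lies in the heavy-ball analysis at eigenvalues where the discriminant vanishes and $M$ inherits a nontrivial Jordan block: there $\|M^k\|$ grows like $k\,(\sqrt{\beta})^k$ rather than $(\sqrt{\beta})^k$, so one cannot obtain a clean exponential rate directly. This is precisely why \thmref{thm_inexact_ad_orig} trades in a slack $\gamma > 0$: for any fixed $\gamma>0$ the polynomial-times-geometric factor is dominated by $c(\sqrt{\beta}+\gamma)^k$ with $c=c(\gamma,\beta)$, absorbing the Jordan-block growth into the constant. Finally, minimizing $\sqrt{\beta}$ over admissible $(\alpha,\beta)$ subject to the uniform discriminant constraint on $[\mu,L]$ reproduces the classical Polyak optimum $\sqrt{\beta^*} = (\sqrt{L}-\sqrt{\mu})/(\sqrt{L}+\sqrt{\mu})$ at $\alpha = 4/(\sqrt{L}+\sqrt{\mu})^2$, matching the stated rates.
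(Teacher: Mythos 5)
This theorem is quoted from \cite{Mehmood2020} (Propositions 10 and 17) and the paper itself gives no proof, so your attempt can only be judged against the standard argument, which your plan essentially follows: freeze $A(x^{(K)})$, $B(x^{(K)})$, unroll the linear recursions, and identify $h^{(K)}$ as a truncated matrix geometric series whose limit is $-B^TA^{-1}\grad f(x^{(K)})$. Your GD case is correct and complete in outline: $\t{x}^{(k)}=(I-\alpha A)^k\grad f(x^{(K)})$, the sum telescopes to $-B^T(I-(I-\alpha A)^K)A^{-1}\grad f(x^{(K)})$, and the residual bound with $\|I-\alpha A\|\le 1-\alpha\mu$ and \eqref{eq:bound:Ainverse} gives the claim. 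The HB lifting to the companion matrix $M$, the per-eigenvalue $2\times 2$ reduction, and the absorption of Jordan-block growth into $c(\cdot+\gamma)^K$ are also the right ingredients (for the geometric-sum step note $I-M$ is invertible since its block determinant is $\det(\alpha A)\neq 0$, and the residual is $B^T[I\;0]M^K(I-M)^{-1}w^{(0)}$, which produces the $B_{\max}/\mu$ factor).

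The genuine gap is your spectral claim for HB: the stated hypotheses $\beta\in[0,1)$ and $\alpha\le 2(1+\beta)/L$ do \emph{not} make $(1+\beta-\alpha\lambda)^2\le 4\beta$ hold for all $\lambda\in[\mu,L]$, so the spectral radius of $M$ is not $\sqrt{\beta}$ in general. The complex-root (underdamped) regime additionally requires $\alpha\mu\ge(1-\sqrt{\beta})^2$, a lower bound on $\alpha$ that is not assumed; the simplest counterexample is $\beta=0$, where IAD+HB is IAD+GD and the per-eigenvalue rate is $|1-\alpha\lambda|$, not $\sqrt{\beta}=0$. What is true, and all the theorem needs, is that each $2\times 2$ block has spectral radius strictly below $1$: in the real-root regime it equals $\bigl(|1+\beta-\alpha\lambda|+\sqrt{(1+\beta-\alpha\lambda)^2-4\beta}\bigr)/2$, which is $<1$ exactly when $0<\alpha\lambda<2(1+\beta)$ and $\beta<1$ (at $\alpha L=2(1+\beta)$ an eigenvalue $-1$ appears, so strict inequality is really needed there). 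Defining $\lambda_{HB}$ as the maximum of these block spectral radii over $[\mu,L]$ (the statement only asserts \emph{some} $\lambda_{HB}\in[0,1)$), your Jordan/Gelfand step $\|M^K\|\le c(\lambda_{HB}+\gamma)^K$ and the rest of the argument go through unchanged; the modulus-$\sqrt{\beta}$ analysis is then only needed for the optimal-rate claim at Polyak's parameters $\alpha=4/(\sqrt{L}+\sqrt{\mu})^2$, $\beta=(\lambda_{HB}^*)^2$, where the discriminant condition does hold uniformly on $[\mu,L]$.
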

By comparison, the exact AD iterations \eqref{eq_hb_exact_ad} are shown to have \revision{\rerevision{sub-optimal} linear convergence rates} of size $\bigO(K\lambda^K)$ \cite[Propositions 8 \& 15]{Mehmood2020}.

We note that in their formulation as stated both IAD+GD (fixed $K$) and IAD+HB (fixed $K$) require one Hessian-vector and one Jacobian-vector product at each iteration of the hypergradient calculation (step 2). By contrast, IFT+CG only requires one Jacobian-vector product at the end of calculation, rather than one per iteration (but still one Hessian-vector product per iteration). This additional computational cost can be alleviated by reformulating the algorithms, e.g.\revision{,} as in Section~\ref{sec_unified_alg}.

In the next section we show that these inexact AD methods are actually variants of the IFT approach, and provide a unified error analysis of all three methods.

\section{Unified Hypergradient Computation Algorithm} \label{sec_unified_alg}

We revisit the inexact AD framework to realize that it is actually an approximate IFT method with specific algorithmic choices. Thus, at the end of this section we propose a unified framework that encompasses all methods in Section 2.

The next theorems make said observation for inexact AD with GD and HB.

\begin{theorem} \label{thm:gd_inexact_ad:new}
    Let $x^{(K)}$ denote the output of GD \eqref{eq_x_gd} after $K$ iterations and let $\Phi(x) = \frac 12 x^T A(x^{(K)}) x - \nabla f(x^{(K)})^T x$. Then the gradient estimate $h^{(K)}$ after $K$ iterations of the inexact AD iterations \eqref{eq_gd_inexact_ad} can also be computed as $h^{(K)} = - B(x^{(K)})^T q^{(K)}$ \revision{with} $q^{(0)} = 0$ and
    \begin{align}
	    q^{(k+1)} &= q^{(k)} - \alpha \nabla \Phi(q^{(k)}), \quad k = 0, \dots, K-1 .\label{eq_gd_inexact_ad:new}
    \end{align}
    
    \begin{proof}
    Notice that the iteration \eqref{eq_gd_inexact_ad} can be written as $h^{(K)} = - \alpha \revision{B(x^{(K)})^T} \sum_{k=0}^{K-1}  \tilde x^{(k)}$. Hence we want to show that the new iteration \eqref{eq_gd_inexact_ad:new} satisfies $q^{(K)} = \alpha \sum_{k=0}^{K-1} \tilde x^{(k)}$ which we prove by induction where we will frequently use $\nabla \Phi(q) = Aq - \nabla f(x^{(K)})$ and $\tilde x^{(0)} = \nabla f(x^{(K)})$.
    
    For $K=0$, using the initial condition and $\nabla \Phi(q^{(0)}) = -\nabla f(x^{(K)})$, \eqref{eq_gd_inexact_ad:new} implies 
    \begin{align*}
        q^{(1)} 
        &= q^{(0)} - \alpha \nabla \Phi(q^{(0)})
        = \alpha \nabla f(x^{(K)}) = \alpha \tilde x^{(0)}.
    \end{align*}

    Now, let the assertion be true for $K-1$. Due to the initial condition, an alternative way to write the \revision{iterations \eqref{eq_gd_inexact_ad}} is
    \begin{align*}
     \tilde x^{(K-1)} &= \tilde x^{(0)} - \sum_{k=0}^{K-2} \alpha A\tilde x^{(k)}.
    \end{align*}
    Thus, with the induction hypothesis  
    \begin{align*}
     \tilde x^{(K-1)} &= \nabla f(x^{(K)}) - A q^{(K-1)} = -\nabla \Phi(q^{(K-1)})
    \end{align*}
    and therefore
    \begin{align*}
        q^{(K)} 
        &= q^{(K-1)} - \alpha \nabla \Phi(q^{(K-1)})
        = \alpha \sum_{k=0}^{K-2} \tilde x^{(k)} + \alpha \tilde x^{(K-1)}
        = \alpha \sum_{k=0}^{K-1} \tilde x^{(k)}. \qedhere
    \end{align*}%
    \end{proof}
\end{theorem}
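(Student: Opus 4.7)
The plan is to reinterpret the IAD+GD recursion as gradient descent applied to the quadratic $\Phi$, whose minimizer is exactly $A(x^{(K)})^{-1}\nabla f(x^{(K)})$. Since $\nabla\Phi(q) = A(x^{(K)})q - \nabla f(x^{(K)})$, the iteration $q^{(k+1)} = q^{(k)} - \alpha\nabla\Phi(q^{(k)})$ is an affine recursion driven by the same $A$ that appears in the IAD adjoint iteration, so the claim should reduce to a clean identity between the two sequences.

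First I would telescope the $h$-recursion: since $h^{(k+1)} = h^{(k)} - \alpha B(x^{(K)})^T \tilde{x}^{(k)}$ with $h^{(0)}=0$, one immediately gets
\[
h^{(K)} = -\alpha B(x^{(K)})^T \sum_{k=0}^{K-1}\tilde{x}^{(k)}.
\]
Thus it suffices to prove $q^{(K)} = \alpha \sum_{k=0}^{K-1}\tilde{x}^{(k)}$, which I would establish by induction on $K$. The base case $K=1$ is immediate from $q^{(0)}=0$, $\nabla\Phi(0) = -\nabla f(x^{(K)})$, and $\tilde{x}^{(0)} = \nabla f(x^{(K)})$.

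For the inductive step, the key is to express $\tilde{x}^{(k)}$ in terms of $q^{(k)}$. Summing the linear recurrence $\tilde{x}^{(j+1)} = \tilde{x}^{(j)} - \alpha A(x^{(K)})\tilde{x}^{(j)}$ telescopically gives
\[
\tilde{x}^{(k)} = \tilde{x}^{(0)} - \alpha A(x^{(K)}) \sum_{j=0}^{k-1}\tilde{x}^{(j)},
\]
and the induction hypothesis rewrites the right-hand side as $\nabla f(x^{(K)}) - A(x^{(K)}) q^{(k)} = -\nabla\Phi(q^{(k)})$. Substituting into the $q$-recursion yields $q^{(k+1)} = q^{(k)} + \alpha\tilde{x}^{(k)}$, closing the induction.

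The only genuine subtlety is keeping the two indexings aligned: the IAD adjoint variable $\tilde{x}^{(k)}$ is naturally read right-to-left from $x^{(K)}$, while $q^{(k)}$ is produced in forward order, and one must confirm that the partial sum up to step $k$ matches on both sides. Once this bookkeeping is done, nothing deeper is needed; the conceptual content — that \emph{inexact} backpropagation for a strongly convex quadratic is literally gradient descent on a dual-style quadratic whose minimizer is the IFT correction $A^{-1}\nabla f$ — is precisely what makes the identity fall out of a one-line induction.
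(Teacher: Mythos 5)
Your proposal is correct and follows essentially the same route as the paper's proof: telescope the $h$-recursion to reduce the claim to $q^{(K)} = \alpha\sum_{k=0}^{K-1}\tilde{x}^{(k)}$, then establish this by induction using the telescoped adjoint recursion $\tilde{x}^{(k)} = \tilde{x}^{(0)} - \alpha A(x^{(K)})\sum_{j=0}^{k-1}\tilde{x}^{(j)}$ together with the induction hypothesis to identify $\tilde{x}^{(k)} = -\nabla\Phi(q^{(k)})$. The bookkeeping point you flag is handled the same way in the paper, so there is nothing to add.
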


A similar observation can be made for HB.
\begin{theorem} \label{thm:hb_inexact_ad:new}
    Let $x^{(K)}$ denote the output of HB \eqref{eq_x_hb} after $K$ iterations and let $\Phi(x) = \frac 12 x^T A(x^{(K)}) x - \nabla f(x^{(K)})^T x$. Then the gradient estimate $h^{(K)}$ after $K$ iterations of the inexact AD iterations \eqref{eq_hb_inexact_ad} can also be computed as $h^{(K)} = - B(x^{(K)})^T q^{(K)}$ with $q^{(0)} = q^{(-1)} = 0$ and
    \begin{align}
	    q^{(k+1)} &= q^{(k)} - \alpha \nabla \Phi(q^{(k)}) + \beta(q^{(k)} - q^{(k-1)}), \qquad k = 0, 1, \dots, K-1.\label{eq_hb_inexact_ad:new}
    \end{align}   
    \begin{proof}
    The proof is similar to the proof of Theorem~\ref{thm:gd_inexact_ad:new}. As before, we notice that the iteration \eqref{eq_hb_inexact_ad} can be written as $h^{(K)} = - \alpha \sum_{k=0}^{K-1} B(x^{(K)})^T \tilde x^{(k)}$. Hence we want to show that the new iteration \eqref{eq_hb_inexact_ad:new} satisfies $q^{(K)} = \alpha \sum_{k=0}^{K-1} \tilde x^{(k)}$ which we prove by induction. 
    
    For $K=0$, using the initial conditions and $\nabla \Phi(q^{(0)}) = -\nabla f(x^{(K)})$, \eqref{eq_hb_inexact_ad:new} implies 
    \begin{align*}
        q^{(1)} 
        &= q^{(0)} - \alpha \nabla \Phi(q^{(0)}) + \beta(q^{(0)} - q^{(-1)})
        = \alpha \nabla f(x^{(K)}) = \alpha \tilde x^{(0)}.
    \end{align*}
    Similarly for $K=1$,
    \begin{align*}
        q^{(2)} 
        &= q^{(1)} - \alpha \nabla \Phi(q^{(1)}) + \beta(q^{(1)} - q^{(0)})
        = \alpha \tilde x^{(0)} - \alpha (A(\alpha \tilde x^{(0)}) - \nabla f(x^{(K)})) + \beta \alpha \tilde x^{(0)} \\
        &= \alpha \tilde x^{(0)} + \alpha \left(\tilde x^{(0)} - \alpha A\tilde x^{(0)} + \beta (\tilde x^{(0)} - \tilde x^{(-1)})\right)
        = \alpha \tilde x^{(0)} + \alpha \tilde x^{(1)}.
    \end{align*}

    Now, let the assertion be true for $K-1$ and $K-2$. Due to the initial conditions, an alternative way to write the \revision{iterations \eqref{eq_hb_inexact_ad_2}} is
    \begin{align*}
     \tilde x^{(K-1)} &= \tilde x^{(0)} - \sum_{k=0}^{K-2} \alpha A\tilde x^{(k)} + \beta \tilde x^{(K-2)},
    \end{align*}
    Thus,  
    \begin{align*}
        q^{(K)} 
        &= q^{(K-1)} - \alpha \nabla \Phi(q^{(K-1)}) + \beta(q^{(K-1)} - q^{(K-2)}) \\
        &= \alpha \sum_{k=0}^{K-2} \tilde x^{(k)} - \alpha (A(\alpha \sum_{k=0}^{K-2} \tilde x^{(k)}) - \nabla f(x^{(K)})) + \beta \alpha \tilde x^{(K-2)} \\
        &= \alpha \sum_{k=0}^{K-2} \tilde x^{(k)} + \alpha \left( - \sum_{k=0}^{K-2} \alpha A\tilde x^{(k)} + \tilde x^{(0)} + \beta \tilde x^{(K-2)}\right)
        = \alpha \sum_{k=0}^{K-2} \tilde x^{(k)} + \alpha \tilde x^{(K-1)}
        = \alpha \sum_{k=0}^{K-1} \tilde x^{(k)}.\qedhere
    \end{align*}
    \end{proof}
\end{theorem}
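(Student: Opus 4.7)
The plan is to mirror the structure of the proof of Theorem~\ref{thm:gd_inexact_ad:new}, adapting it to accommodate the additional momentum term. First I would observe that by summing the update $h^{(k+1)} = h^{(k)} - \alpha B(x^{(K)})^T \tilde x^{(k)}$ from \eqref{eq_hb_inexact_ad} and using $h^{(0)}=0$, the final estimate admits the closed form $h^{(K)} = -\alpha B(x^{(K)})^T \sum_{k=0}^{K-1} \tilde x^{(k)}$. Hence the theorem reduces to proving the identity $q^{(K)} = \alpha \sum_{k=0}^{K-1} \tilde x^{(k)}$, which, once established, immediately yields $h^{(K)} = -B(x^{(K)})^T q^{(K)}$.

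Next I would prove this identity by strong induction on $K$. Because the new recursion \eqref{eq_hb_inexact_ad:new} reaches back two steps, two base cases are needed, $K=1$ and $K=2$. Each is a direct computation using $\nabla \Phi(q) = A q - \nabla f(x^{(K)})$, the initializations $q^{(0)}=q^{(-1)}=0$ and $\tilde x^{(-1)}=0$, and the definition $\tilde x^{(0)} = \nabla f(x^{(K)})$; after substitution both sides simplify to $\alpha \tilde x^{(0)}$ and $\alpha(\tilde x^{(0)}+\tilde x^{(1)})$ respectively.

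For the inductive step, assuming the identity holds for $K-1$ and $K-2$, I would telescope the AD recursion \eqref{eq_hb_inexact_ad_2}. Because $\tilde x^{(-1)}=0$, collapsing the momentum terms yields the compact form
\begin{align*}
\tilde x^{(K-1)} = \tilde x^{(0)} - \alpha \sum_{k=0}^{K-2} A \tilde x^{(k)} + \beta \tilde x^{(K-2)}.
\end{align*}
Combining this with the induction hypothesis $q^{(K-1)} = \alpha \sum_{k=0}^{K-2}\tilde x^{(k)}$ and $q^{(K-2)} = \alpha \sum_{k=0}^{K-3}\tilde x^{(k)}$ (so that $q^{(K-1)}-q^{(K-2)} = \alpha \tilde x^{(K-2)}$), I would expand $\nabla\Phi(q^{(K-1)}) = A q^{(K-1)} - \nabla f(x^{(K)})$ and substitute into \eqref{eq_hb_inexact_ad:new} to get $q^{(K)} = q^{(K-1)} + \alpha \tilde x^{(K-1)} = \alpha \sum_{k=0}^{K-1}\tilde x^{(k)}$, closing the induction.

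The main obstacle, if any, is bookkeeping with the momentum term: one has to handle correctly the fact that the HB recursion requires two previous iterates and that the telescoped formula for $\tilde x^{(K-1)}$ retains an explicit $\beta \tilde x^{(K-2)}$ term, matching the $\beta(q^{(K-1)}-q^{(K-2)})$ term in \eqref{eq_hb_inexact_ad:new} precisely because the inductive hypothesis gives $q^{(K-1)}-q^{(K-2)} = \alpha \tilde x^{(K-2)}$. Once this correspondence is noticed, the computation is routine.
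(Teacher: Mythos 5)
Your proposal is correct and follows essentially the same route as the paper's proof: reduce to the identity $q^{(K)} = \alpha \sum_{k=0}^{K-1} \tilde x^{(k)}$, verify the two base cases, and in the inductive step telescope the AD recursion (using $\tilde x^{(-1)}=0$) and use the hypotheses at $K-1$ and $K-2$ so that $q^{(K-1)}-q^{(K-2)}=\alpha\tilde x^{(K-2)}$ matches the momentum term. No gaps.
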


The analysis in Theorem \ref{thm:gd_inexact_ad:new} and \ref{thm:hb_inexact_ad:new} implies that $q^{(K)}$ approximately solves $A(x^{(K)}) q = \nabla f(x^{(K)})$. Thus, inexact AD is a special case of IFT with GD/HB as the lower-level solver and GD/HB to solve the system of linear equations \revision{for the upper-level hypergradient estimate}. This observation motivates us to define a generalised IFT algorithm which captures both approaches outlined in Section 2.

\begin{algorithm}[h!]
\caption{IFT to compute gradient estimate $\t h$}\label{alg:ift}
\begin{algorithmic}[1]
\State Find an approximate solution $\tilde x$ of the lower-level problem using any method of choice with any number of iterations or accuracy.
\State Find an approximate solution $\tilde q$ solving $A(\tilde x) q = \grad f(\tilde x)$  using any method of choice with any number of iterations or accuracy.
\State Return the gradient estimate $\tilde h := -B(\tilde x)^T \tilde q$.
\end{algorithmic}
\end{algorithm}

\begin{remark}
If we choose a method in step~1 to find a solution with $\varepsilon$ accuracy and CG in step 2 to find a solution with residual accuracy $\delta$, then \revision{we recover Algorithm~\ref{alg:ift+cg}} as proposed in \cite{Pedregosa2016, Zucchet2022}. If we choose GD/HB in step~1 for $K$ iterations and GD/HB in step~2 for $K$ iterations, then \revision{we get Algorithms~\ref{alg:iad+gd} and \ref{alg:iad+hb}} as proposed in \cite{Mehmood2020}. However, going forward both strategies can also be combined given more flexibility to the user.
\end{remark}

\section{A priori and \rerevision{a posteriori error analysis}} \label{sec_bounds}
We now derive new a priori and a posteriori error bounds for hypergradients as computed in Algorithm~\ref{alg:ift}. The a priori bound shows explicit convergence results that are independent of the lower-level solution estimate $\t x$.
By contrast, the a posteriori bound gives an estimate on the error that is computable (in the sense that no knowledge of $x^*$ is required). Note also, that in contrast to \cite{Pedregosa2016, Zucchet2022}~(see also Theorem \ref{thm_ift_v1} and \ref{thm_ift_v2}), we do not make any assumption on the size of the errors, nor that these estimates are part of a bilevel programming scheme.


\begin{theorem}[A posteriori bound] \label{thm:aposteriori}
Suppose Assumptions~\ref{ass_smoothness}, \ref{ass_smoothness_extra} and \ref{ass_smoothness_upper} hold.
Let $\t x, \t q$ and $\t h$ be the output of Algorithm~\ref{alg:ift}.
Let $\t \varepsilon := \|\nabla g(\t x)\|/\mu$ and $\t \delta := \|A(\t x) \t q - \nabla f(\t x)\|$. Note $\|\t x - x^*\| \leq \t \varepsilon$. Moreover, we define
$$c(x) := \frac{L_{\nabla f}\|B(x)\| }{\mu} + L_{A^{-1}} \|\grad f(x)\| \|B(x)\| +  \frac{L_B \|\grad f(x)\|}{\mu},$$
where $L_{A^{-1}}$ is the Lipschitz constant for $A(x)^{-1}$, which exists by \lemref{lem_lip_Ainv} below.
Then, the following a posteriori bound holds:
\begin{align*}
\|\t h - h^*\| 
&\leq c(\t x) \t \varepsilon + \frac{\|B(\t x)\|}{\mu} \t \delta + \frac{L_B L_{\nabla f}}{\mu} \t \varepsilon^2.
\end{align*}
\end{theorem}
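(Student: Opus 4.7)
The plan is to split the error into two natural pieces,
\[
\tilde h - h^* = (\tilde h - h_\varepsilon) + (h_\varepsilon - h^*), \qquad h_\varepsilon := -B(\tilde x)^T A(\tilde x)^{-1} \nabla f(\tilde x),
\]
so that the first piece quantifies the inexact linear solve and the second piece quantifies the inexact lower-level solve. Before starting, I would verify the claim $\|\tilde x - x^*\| \leq \tilde\varepsilon$: since $g(\cdot,\theta)$ is $\mu$-strongly convex and $\nabla g(x^*)=0$, the standard bound $\mu\|\tilde x - x^*\| \leq \|\nabla g(\tilde x) - \nabla g(x^*)\| = \|\nabla g(\tilde x)\|$ gives exactly this.

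For the first piece, I would rewrite $\tilde h - h_\varepsilon = -B(\tilde x)^T(\tilde q - A(\tilde x)^{-1}\nabla f(\tilde x))$ and then pull out $A(\tilde x)^{-1}$: the residual in the inner bracket is $A(\tilde x)^{-1}(A(\tilde x)\tilde q - \nabla f(\tilde x))$, of norm at most $\tilde\delta/\mu$ by the bound \eqref{eq:bound:Ainverse}. Taking the operator norm of $B(\tilde x)^T$ yields the middle term $\|B(\tilde x)\|\tilde\delta/\mu$.

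The main work is the second piece, and the key idea is choosing the telescoping order so that the ``outer'' factors are evaluated at $\tilde x$ (where $c(\cdot)$ lives) rather than at $x^*$. I would introduce the intermediate quantities $B(\tilde x)^T A(x^*)^{-1}\nabla f(\tilde x)$ and $B(x^*)^T A(x^*)^{-1}\nabla f(\tilde x)$ and write
\begin{align*}
h_\varepsilon - h^* = -\,&B(\tilde x)^T\bigl(A(\tilde x)^{-1} - A(x^*)^{-1}\bigr)\nabla f(\tilde x) \\
-\,&\bigl(B(\tilde x) - B(x^*)\bigr)^T A(x^*)^{-1}\nabla f(\tilde x) \\
-\,&B(x^*)^T A(x^*)^{-1}\bigl(\nabla f(\tilde x) - \nabla f(x^*)\bigr).
\end{align*}
The first two lines, after applying Lipschitz continuity of $A^{-1}$ (via \lemref{lem_lip_Ainv}) and of $B$ together with \eqref{eq:bound:Ainverse}, yield exactly the second and third summands of $c(\tilde x)\tilde\varepsilon$. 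For the third line I would use $\|A(x^*)^{-1}\| \leq 1/\mu$ and $L_{\nabla f}$-Lipschitzness of $\nabla f$, and bound $\|B(x^*)\|$ by the triangle inequality $\|B(x^*)\| \leq \|B(\tilde x)\| + L_B \tilde\varepsilon$. This splits the third line into two contributions: the $\|B(\tilde x)\|$ part gives the remaining $L_{\nabla f}\|B(\tilde x)\|\tilde\varepsilon/\mu$ summand of $c(\tilde x)\tilde\varepsilon$, and the $L_B\tilde\varepsilon$ part gives the lone $L_B L_{\nabla f}\tilde\varepsilon^2/\mu$ term in the stated bound. Combining with the bound on $\|\tilde h - h_\varepsilon\|$ via the triangle inequality finishes the proof.

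The main obstacle is simply picking the right telescoping order: a naive decomposition (e.g.\ changing $\nabla f$ first) produces cross terms of the form $L_{A^{-1}}\|B(\tilde x)\|L_{\nabla f}\tilde\varepsilon^2$ which are not present in the stated bound. The order above, combined with absorbing $\|B(x^*)\|$ into $\|B(\tilde x)\|$ only on the single $\nabla f$-difference term, is what produces the clean single quadratic remainder $L_B L_{\nabla f}\tilde\varepsilon^2/\mu$.
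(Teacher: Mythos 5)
Your proposal is correct and follows essentially the same route as the paper: the split into the inexact-linear-solve term (bounded by $\|B(\t x)\|\t\delta/\mu$ via $\|A(\t x)^{-1}\|\leq 1/\mu$) and the difference $D(\t x)\grad f(\t x)-D(x^*)\grad f(x^*)$, with the latter handled by exactly the Lipschitz bounds on $A^{-1}$, $B$ and $\grad f$ and the absorption $\|B(x^*)\|\leq\|B(\t x)\|+L_B\t\varepsilon$ applied only to the $\grad f$-difference term. The only difference is presentational: your three-term telescoping is the flattened version of the paper's nested Lemmas~\ref{lem:bound:linsys}, \ref{lem:ld} and \ref{lem:bound:df}, and it yields the identical constants.
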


\begin{theorem}[A priori bound] \label{thm:apriori}
Suppose Assumptions~\ref{ass_smoothness}, \ref{ass_smoothness_extra} and \ref{ass_smoothness_upper} hold.
Let $\t x, \t q$ and $\t h$ be the output of Algorithm~\ref{alg:ift}, computed such that $\|\t x - x^*\| \leq \varepsilon$ and $\|A(\t x) \t q - \nabla f(\t x)\| \leq \delta$. Then, with $c$ as defined in Theorem \ref{thm:aposteriori} the following a priori bound holds:
\begin{align*}
\|\t h - h^*\| 
&\leq c(x^*) \varepsilon + \frac{\|B(x^*)\|}{\mu} \delta + \frac{L_B L_{\nabla f}}{\mu} \varepsilon^2  + \frac{L_B}{\mu} \delta \varepsilon.
\end{align*}
In particular, $\|\t h - h^*\| = \bigO(\varepsilon + \delta + \varepsilon^2 + \delta^2)$ and $\|\t h - h^*\| \to 0$ as $\varepsilon, \delta \to 0$.
\end{theorem}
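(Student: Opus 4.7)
The plan is to mimic the chain of estimates used for the a posteriori bound (Theorem~\ref{thm:aposteriori}), but this time evaluate the Lipschitz constants and norms at $x^*$ rather than at $\tilde x$, paying the cost of transferring $\tilde x$-quantities to $x^*$-quantities as quadratic or cross terms in $\varepsilon,\delta$. Since the hypergradient map $P(x) := B(x)^T A(x)^{-1}\nabla f(x)$ satisfies $h^* = -P(x^*)$ and $\tilde h = -B(\tilde x)^T \tilde q$, I would split
\begin{equation*}
\tilde h - h^* \;=\; -B(\tilde x)^T\!\bigl[\tilde q - A(\tilde x)^{-1}\nabla f(\tilde x)\bigr] \;-\; \bigl[P(\tilde x)-P(x^*)\bigr],
\end{equation*}
and treat the two pieces separately.

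For the first (residual) piece, the identity $\tilde q - A(\tilde x)^{-1}\nabla f(\tilde x) = A(\tilde x)^{-1}(A(\tilde x)\tilde q-\nabla f(\tilde x))$ combined with \eqref{eq:bound:Ainverse} yields a bound of $\|B(\tilde x)\|\,\delta/\mu$. Crucially, replacing $\|B(\tilde x)\|$ by $\|B(x^*)\|+L_B\varepsilon$ (by Lipschitzness of $B$ and the hypothesis $\|\tilde x - x^*\|\le\varepsilon$) produces exactly the $\tfrac{\|B(x^*)\|}{\mu}\delta$ term plus the cross term $\tfrac{L_B}{\mu}\delta\varepsilon$ appearing in the statement.

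For the second piece, I would decompose
\begin{equation*}
P(\tilde x) - P(x^*) = \bigl[B(\tilde x)-B(x^*)\bigr]^T A(\tilde x)^{-1}\nabla f(\tilde x) + B(x^*)^T\bigl[A(\tilde x)^{-1}\nabla f(\tilde x) - A(x^*)^{-1}\nabla f(x^*)\bigr],
\end{equation*}
choosing the split so that the leftover outer factor is $B(x^*)$ rather than $B(\tilde x)$. The first summand is handled by $\|B(\tilde x)-B(x^*)\|\le L_B\varepsilon$, $\|A(\tilde x)^{-1}\|\le 1/\mu$, and $\|\nabla f(\tilde x)\|\le \|\nabla f(x^*)\|+L_{\nabla f}\varepsilon$: this gives the $\tfrac{L_B\|\nabla f(x^*)\|}{\mu}\varepsilon$ contribution to $c(x^*)\varepsilon$ together with the quadratic remainder $\tfrac{L_B L_{\nabla f}}{\mu}\varepsilon^2$. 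The second summand I would split once more,
\begin{equation*}
A(\tilde x)^{-1}\nabla f(\tilde x) - A(x^*)^{-1}\nabla f(x^*) = A(\tilde x)^{-1}\bigl[\nabla f(\tilde x)-\nabla f(x^*)\bigr] + \bigl[A(\tilde x)^{-1}-A(x^*)^{-1}\bigr]\nabla f(x^*),
\end{equation*}
and bound it by $\tfrac{L_{\nabla f}}{\mu}\varepsilon + L_{A^{-1}}\|\nabla f(x^*)\|\varepsilon$, supplying the remaining two contributions to $c(x^*)\varepsilon$ once multiplied by $\|B(x^*)\|$. Summing the four pieces yields the claimed inequality; the asymptotic statement $\|\tilde h - h^*\|=\mathcal{O}(\varepsilon+\delta+\varepsilon^2+\delta^2)$ follows immediately, and $\delta\varepsilon\le\tfrac12(\varepsilon^2+\delta^2)$ justifies the stated form.

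The only non-routine step is ensuring the constant $L_{A^{-1}}$ used above actually exists, but this is precisely the content of \lemref{lem_lip_Ainv} invoked in Theorem~\ref{thm:aposteriori}, and it is standard (differentiate $A(x)A(x)^{-1}=I$, use boundedness of $\|A(\cdot)^{-1}\|$ and Lipschitzness of $A$). The main care needed in writing the proof is therefore purely bookkeeping: making the right split at each step so that the outer norms reduce to quantities at $x^*$, with all dependence on $\tilde x$ absorbed into higher-order $\varepsilon$ or cross $\delta\varepsilon$ terms.
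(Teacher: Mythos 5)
Your proposal is correct and follows essentially the same route as the paper: the same split into a linear-solve residual term (bounded via $\|A(\tilde x)^{-1}\|\le 1/\mu$, with $\|B(\tilde x)\|\le\|B(x^*)\|+L_B\varepsilon$ producing the $\delta$ and $\delta\varepsilon$ terms) plus a Lipschitz perturbation of $x\mapsto D(x)\nabla f(x)$, which the paper obtains by reusing Lemma~\ref{lem:bound:df} with the arguments swapped ($x_1=x^*$, $x_2=\tilde x$) and which you re-derive with a marginally different add-and-subtract grouping yielding identical constants. No gaps; the only cosmetic difference is your direct re-derivation versus the paper's invocation of the existing lemma.
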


\subsection{Proofs of Theorems~\ref{thm:aposteriori} and \ref{thm:apriori}}
\begin{lemma} \label{lem:bound:linsys}
Assume that $\|A(\t x)^{-1}\| \leq \mu^{-1}$ for any $\t x$. Then, for any $\t x$ and $\t q$ it holds that
\begin{align*}
\|B(\t x)^T \t q - B(\t x)^T A(\t x)^{-1} \grad f(\t x)\|
&\leq \frac{\|B(\t x)\|}{\mu} \|A(\t x) \t q - \grad f(\t x)\|.
\end{align*}
\begin{proof}
A direct calculation and multiplying with $A(\t x)^{-1} A(\t x)$ it holds that
\begin{align*}
\|B(\t x)^T \t q - B(\t x)^T A(\t x)^{-1} \grad f(\t x)\| 
&\leq \|B(\t x)\| \|\t q - A(\t x)^{-1} \grad f(\t x)\| \\
&= \|B(\t x)\| \|A(\t x)^{-1} (A(\t x) \t q - \grad f(\t x))\| \\
&\leq \|B(\t x)\| \|\|A(\t x)^{-1}\| \|A(\t x) \t q - \grad f(\t x)\|
\end{align*}
The assertion then follows from $\|A(\t x)^{-1}\| \leq \mu^{-1}$.
\end{proof}
\end{lemma}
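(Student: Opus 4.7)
The proof is essentially a chain of elementary operator-norm inequalities, leveraging submultiplicativity and the hypothesis $\|A(\t x)^{-1}\| \leq \mu^{-1}$. The plan is to isolate the residual $A(\t x)\t q - \nabla f(\t x)$ by inserting an $A(\t x)^{-1}A(\t x) = I$ factor.

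First, I would factor the common $B(\t x)^T$ out of the left-hand side and apply submultiplicativity, using that the operator 2-norm of a matrix equals that of its transpose. This gives
$$\|B(\t x)^T \t q - B(\t x)^T A(\t x)^{-1} \nabla f(\t x)\| \leq \|B(\t x)\|\,\|\t q - A(\t x)^{-1} \nabla f(\t x)\|,$$
reducing the task to bounding $\|\t q - A(\t x)^{-1} \nabla f(\t x)\|$ in terms of the residual.

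Second, since Assumption~\ref{ass_smoothness} (and the explicit hypothesis $\|A(\t x)^{-1}\|\leq 1/\mu$) guarantees that $A(\t x)$ is invertible, I can write
$$\t q - A(\t x)^{-1} \nabla f(\t x) = A(\t x)^{-1}\bigl(A(\t x)\t q - \nabla f(\t x)\bigr).$$
A second application of submultiplicativity then introduces a factor of $\|A(\t x)^{-1}\|$, which by hypothesis is at most $\mu^{-1}$.

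Combining the two bounds yields the stated inequality. There is no significant obstacle: the only mild subtlety is recognising that the natural way to convert the difference $\t q - A^{-1}\nabla f$ into the computable residual $A\t q - \nabla f$ is to pull out an $A^{-1}$, after which the bound on $\|A^{-1}\|$ delivers the $1/\mu$ factor. The whole argument is two lines once this trick is spotted.
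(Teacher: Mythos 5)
Your proposal is correct and follows exactly the paper's argument: factor out $B(\t x)^T$ by submultiplicativity, rewrite $\t q - A(\t x)^{-1}\grad f(\t x)$ as $A(\t x)^{-1}(A(\t x)\t q - \grad f(\t x))$, and apply the bound $\|A(\t x)^{-1}\|\leq \mu^{-1}$. No differences worth noting.
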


\begin{lemma} \label{lem:ld}
Let $A^{-1}$ and $B$ be Lipschitz continuous with constants $L_{A^{-1}}$ and $L_B$, respectively, and $\|A(x)^{-1}\| \leq \mu$ for any $x$. Then, the mapping $D$ is locally Lipschitz continuous. Specifically, for any $x_1, x_2$ and $L_D(x) := \|B(x)\| L_{A^{-1}} + L_B /\mu$ it holds that
\begin{align*}
\|D(x_1) - D(x_2)\|
&\leq L_D(x_1) \|x_1 - x_2\|.
\end{align*}
\begin{proof}
Adding and subtracting $B(x_1)^T A(x_2)^{-1}$ and the triangle inequality yields
\begin{align*}
\|D(x_1) - \revision{D(x_2)}\|
&= \|B(x_1)^T A(x_1)^{-1} - B(x_2)^T A^{-1}(x_2)\| \\
&\leq \|B(x_1)\| \|A(x_1)^{-1} - A(x_2)^{-1}\| + \|A(x_2)^{-1}\| \|B(x_1) - B(x_2)\|
\end{align*}
Then the result follows from Lipschitz continuity as well as the estimate $\|A(x_2)^{-1}\| \leq \mu$.
\end{proof}
\end{lemma}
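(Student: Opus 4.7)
The plan is to use the standard ``add and subtract'' trick in matrix perturbation arguments. Starting from the definition $D(x) = -B(x)^T A(x)^{-1}$ from \eqref{eq_ift_basic}, I would write
\begin{align*}
D(x_1) - D(x_2) &= -B(x_1)^T A(x_1)^{-1} + B(x_2)^T A(x_2)^{-1} \\
&= -B(x_1)^T\bigl[A(x_1)^{-1} - A(x_2)^{-1}\bigr] + \bigl[B(x_2)^T - B(x_1)^T\bigr] A(x_2)^{-1},
\end{align*}
obtained by inserting $\pm B(x_1)^T A(x_2)^{-1}$ into the right-hand side. The specific choice of the cross term $B(x_1)^T A(x_2)^{-1}$ (rather than $B(x_2)^T A(x_1)^{-1}$) is what ensures that $B(x_1)$ appears alongside the $A^{-1}$-difference, which is essential for recovering $L_D$ evaluated at $x_1$ as stated.

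From here I would apply the triangle inequality and sub-multiplicativity of the operator $2$-norm to obtain
\begin{align*}
\|D(x_1) - D(x_2)\| \leq \|B(x_1)\| \, \|A(x_1)^{-1} - A(x_2)^{-1}\| + \|A(x_2)^{-1}\| \, \|B(x_1) - B(x_2)\|.
\end{align*}
Then the Lipschitz hypotheses on $A^{-1}$ and $B$ give $\|A(x_1)^{-1} - A(x_2)^{-1}\| \leq L_{A^{-1}} \|x_1 - x_2\|$ and $\|B(x_1) - B(x_2)\| \leq L_B \|x_1 - x_2\|$, while the uniform bound $\|A(x_2)^{-1}\| \leq 1/\mu$ (cf.~\eqref{eq:bound:Ainverse}) handles the remaining factor. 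Collecting the two contributions yields exactly $\bigl(\|B(x_1)\| L_{A^{-1}} + L_B/\mu\bigr) \|x_1 - x_2\| = L_D(x_1) \|x_1 - x_2\|$.

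There is no real obstacle here; the argument is a one-line manipulation once the cross term is chosen correctly. The only point requiring any thought is the asymmetry of the bound: the result is stated with $L_D(x_1)$, so I need to split the difference in the order shown above rather than the symmetric alternative, which would have produced $L_D(x_2)$ instead. This is purely a bookkeeping choice and does not require any nontrivial estimate.
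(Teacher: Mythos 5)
Your proposal is correct and follows essentially the same route as the paper's proof: insert the cross term $B(x_1)^T A(x_2)^{-1}$, apply the triangle inequality and submultiplicativity, then invoke the two Lipschitz constants and the bound on $\|A(x_2)^{-1}\|$. If anything, your version is slightly cleaner in that you correctly use $\|A(x_2)^{-1}\|\leq 1/\mu$ (consistent with \eqref{eq:bound:Ainverse} and with the $L_B/\mu$ term in $L_D$), whereas the lemma's hypothesis as written says $\|A(x)^{-1}\|\leq\mu$, which is evidently a typo.
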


\begin{lemma} \label{lem:bound:df}
Let the assumptions of Lemma~\ref{lem:ld} hold  and let $\nabla f$ be Lipschitz continuous with constant $L_{\nabla f}$. Let $c$ be as defined in Theorem~\ref{thm:aposteriori}. Then, for any $x_1, x_2$ it holds that
\begin{align*}
\|D(x_1) \grad f(x_1) - D(x_2) \grad f(x_2)\|\
&\leq c(x_1) \|x_1 - x_2\| + \frac{L_B L_{\nabla f}}{\mu} \|x_1 - x_2\|^2.
\end{align*}
\begin{proof}
We add and subtract $D(x_2) \grad f(x_1) $ and use the triangle inequality to yield
\begin{align*}
\|D(x_1) \grad f(x_1) - D(x_2) \grad f(x_2)\|
&\leq \|\grad f(x_1)\| \|D(x_1) - D(x_2)\| + \|D(x_2)\| \|\grad f(x_1) - \grad f(x_2)\|.
\end{align*}
Notice further that
\begin{align*}
\|D(x_2)\| 
= \|B(x_2)^T A(x_2)^{-1}\|
\leq \|B(x_2)\| \|A(x_2)^{-1}\|
\leq \frac{\|B(x_2)\|}{\mu}.
\end{align*}
Combining both inequalities and invoking Lemma~\ref{lem:ld} leads to
\begin{align*}
\|D(x_1) \grad f(x_1) - D(x_2) \grad f(x_2)\|
&\leq \|\grad f(x_1)\| \|D(x_1) - D(x_2)\| + \|D(x_2)\| \|\grad f(x_1) - \grad f(x_2)\| \\
&\leq \|\grad f(x_1)\| L_D(x_1) \|x_1 - x_2\| + \frac{\|B(x_2)\|}{\mu} L_{\nabla f} \|x_1 - x_2\| \\
&\leq \frac{\|B(x_1)\|}{\mu} L_{\nabla f} \|x_1 - x_2\| + \frac{L_B}{\mu} L_{\nabla f} \|x_1 - x_2\|^2 + \|\grad f(x_1)\| L_D(x_1) \|x_1 - x_2\|,
\end{align*}
where the last inequality follows from the Lipschitz continuity of $B$,
\begin{align*}
\|B(x_2)\| \leq \|B(x_1)\| + \|B(x_1) - B(x_2)\| \leq \|B(x_1)\| + L_B \|x_1 - x_2\|. \qquad \qedhere
\end{align*}
\end{proof}
\end{lemma}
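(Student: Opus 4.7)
My plan is to prove Lemma~\ref{lem:bound:df} by a standard add-and-subtract decomposition of the product $D(x)\nabla f(x)$, exploiting the already-established Lipschitz bound on $D$ from Lemma~\ref{lem:ld} and the hypothesized Lipschitz continuity of $\nabla f$. Specifically, I will split
\begin{align*}
D(x_1)\nabla f(x_1) - D(x_2)\nabla f(x_2) = \bigl(D(x_1)-D(x_2)\bigr)\nabla f(x_1) + D(x_2)\bigl(\nabla f(x_1)-\nabla f(x_2)\bigr),
\end{align*}
so that the triangle inequality produces the upper bound $\|\nabla f(x_1)\|\,\|D(x_1)-D(x_2)\| + \|D(x_2)\|\,\|\nabla f(x_1)-\nabla f(x_2)\|$. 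Each factor is then controlled by a prior lemma or hypothesis.

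For the first summand I will plug in $\|D(x_1)-D(x_2)\| \le L_D(x_1)\|x_1-x_2\| = \bigl(\|B(x_1)\|L_{A^{-1}} + L_B/\mu\bigr)\|x_1-x_2\|$ directly from Lemma~\ref{lem:ld}. Multiplying by $\|\nabla f(x_1)\|$ produces two of the three terms defining $c(x_1)$, namely $L_{A^{-1}}\|\nabla f(x_1)\|\|B(x_1)\|$ and $L_B\|\nabla f(x_1)\|/\mu$. For the second summand I will bound $\|D(x_2)\| = \|B(x_2)^T A(x_2)^{-1}\| \le \|B(x_2)\|/\mu$ by submultiplicativity together with \eqref{eq:bound:Ainverse}, and $\|\nabla f(x_1)-\nabla f(x_2)\| \le L_{\nabla f}\|x_1-x_2\|$ by Assumption~\ref{ass_smoothness_upper}.

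The only subtlety is that the stated bound must be expressed in terms of $\|B(x_1)\|$ rather than $\|B(x_2)\|$ in order to match the definition of $c(x_1)$; this is where the quadratic term comes from. To handle it I will invoke Lipschitz continuity of $B$ (Assumption~\ref{ass_smoothness_extra}) to write $\|B(x_2)\| \le \|B(x_1)\| + L_B\|x_1-x_2\|$. Substituting back into the second summand yields one contribution $L_{\nabla f}\|B(x_1)\|/\mu \cdot \|x_1-x_2\|$, which supplies the missing third term of $c(x_1)\|x_1-x_2\|$, and a residual $(L_B L_{\nabla f}/\mu)\|x_1-x_2\|^2$, which matches exactly the quadratic term in the conclusion.

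No step is genuinely hard: the proof is essentially careful bookkeeping. The only place requiring attention is the asymmetric choice to anchor constants at $x_1$ (which is forced by the way $c$ is defined in Theorem~\ref{thm:aposteriori}, and motivated by the intended application where $x_1$ will play the role of the computable iterate $\t x$ and $x_2$ the role of the unknown $x^*$). Summing the two bounded summands and collecting like terms then delivers the claim $\|D(x_1)\nabla f(x_1) - D(x_2)\nabla f(x_2)\| \le c(x_1)\|x_1-x_2\| + (L_B L_{\nabla f}/\mu)\|x_1-x_2\|^2$.
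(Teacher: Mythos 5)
Your proposal is correct and follows essentially the same route as the paper's proof: the identical add-and-subtract decomposition, the bound $\|D(x_2)\|\leq\|B(x_2)\|/\mu$, Lemma~\ref{lem:ld} for the $D$-difference, and the Lipschitz bound $\|B(x_2)\|\leq\|B(x_1)\|+L_B\|x_1-x_2\|$ to anchor everything at $x_1$ and produce the quadratic term. The bookkeeping matches the definition of $c(x_1)$ exactly, so nothing is missing.
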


The previous two Lemmas needed the Lipschitz continuity of $A^{-1}$. We show next that \revision{this} follows directly from our assumptions on $A$.

\begin{lemma}
Let $A$ be Lipschitz continuous with constant $L_A$ and $\|A(x)^{-1}\| \leq \mu$ for any $x$. Then, the mapping $A^{-1}$ is  Lipschitz continuous with constant $L_{A^{-1}} \leq L_A/\mu^2$. \label{lem_lip_Ainv}
\end{lemma}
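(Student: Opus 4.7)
The plan is to use the standard resolvent-style identity for the difference of two inverses, which converts a statement about $A^{-1}$ into one about $A$ itself.

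First I would write down, for any two invertible matrices $M_1, M_2$, the algebraic identity
\begin{align*}
M_1^{-1} - M_2^{-1} = M_1^{-1}(M_2 - M_1) M_2^{-1},
\end{align*}
which is verified by multiplying on the left by $M_1$ and on the right by $M_2$. Applying this to $M_1 = A(x_1)$ and $M_2 = A(x_2)$ (both invertible by Assumption~\ref{ass_smoothness}) gives
\begin{align*}
A(x_1)^{-1} - A(x_2)^{-1} = A(x_1)^{-1}\bigl(A(x_2) - A(x_1)\bigr) A(x_2)^{-1}.
\end{align*}

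Next I would take operator norms, use submultiplicativity, and plug in the two hypotheses: the uniform bound $\|A(x)^{-1}\| \leq \mu^{-1}$ (as recorded in \eqref{eq:bound:Ainverse}; the statement of the lemma appears to abbreviate this bound as $\mu$) applied to both factors, and the Lipschitz estimate $\|A(x_1) - A(x_2)\| \leq L_A \|x_1 - x_2\|$ from Assumption~\ref{ass_smoothness_extra} applied to the middle factor. This yields
\begin{align*}
\|A(x_1)^{-1} - A(x_2)^{-1}\| \leq \|A(x_1)^{-1}\| \cdot \|A(x_2) - A(x_1)\| \cdot \|A(x_2)^{-1}\| \leq \frac{L_A}{\mu^2}\|x_1 - x_2\|,
\end{align*}
which is the claimed bound $L_{A^{-1}} \leq L_A/\mu^2$.

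There is no real obstacle here: the only subtlety is spotting the resolvent identity; once it is in place the proof is one line of submultiplicativity. I would keep the presentation to just the identity, the norm bound, and the substitution of the two constants.
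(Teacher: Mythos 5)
Your proof is correct and is essentially identical to the paper's own argument: the same resolvent identity $A(x_1)^{-1}-A(x_2)^{-1}=A(x_1)^{-1}(A(x_2)-A(x_1))A(x_2)^{-1}$ followed by submultiplicativity and the bounds $\|A(\cdot)^{-1}\|\le \mu^{-1}$ and $L_A$. Your side remark that the lemma's hypothesis $\|A(x)^{-1}\|\le\mu$ should read $\mu^{-1}$ (as in \eqref{eq:bound:Ainverse}) is also the right reading.
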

\begin{proof}
Straightforward calculations \rerevision{lead} to
\begin{align*}
\|A(x)^{-1} - A(y)^{-1}\|
&= \revision{\|A(x)^{-1}(A(y)-A(x))A(y)^{-1}\|
\leq \|A(x)^{-1}\| \|A(y) - A(x)\|\|A(y)^{-1}\|}
\end{align*}
and the assertion follows directly from the assumptions on $A$.
\end{proof}

We are now able to prove our main results.

\begin{proof}[Proof of Theorem~\ref{thm:aposteriori}]
We start bounding the error in the hypergradient using Lemma~\ref{lem:bound:linsys}.
\begin{align}
\|\t h - h\| 
&= \|B(\t x)^T \t q - D(x^*) \grad f(x^*)\| \notag \\
&\leq \|B(\t x)^T \t q - B(\t x)^T A(\t x)^{-1} \grad f(\t x)\| + \|D(\t x) \grad f(\t x) - D(x^*) \grad f(\t x^*)\| \notag \\
&\leq \frac{\|B(\t x)\|}{\mu} \|A(\t x) \t q - \grad f(\t x)\| + \|D(\t x) \grad f(\t x) - D(x^*) \grad f(\t x^*)\| \label{eq:thm:bounds:step1}
\end{align}

For the a posteriori bound, we invoke Lemma~\ref{lem:bound:df} with $x_1 = \t x$ and $x_2 = x^*$ and apply it to \eqref{eq:thm:bounds:step1},
\begin{align*}
\|\t h - h\| 
&\leq \frac{\|B(\t x)\|}{\mu} \|A(\t x) \t q - \grad f(\t x)\| 
+ \|D(\t x) \grad f(\t x) - D(x^*) \grad f(\t x^*)\| \\
&\leq \frac{\|B(\t x)\|}{\mu} \|A(\t x) \t q - \grad f(\t x)\| 
+ c(\t x) \|\t x - x^*\| + \frac{L_B L_{\nabla f}}{\mu} \|\t x - x^*\|^2.
\end{align*}

Then using the notation $\t \varepsilon = \|\nabla g(\t x)\|/\mu$ and $\t \delta = \|A(\t x) \t q - \nabla f(\t x)\|$ we arrive at the assertion,
\begin{align*}
\|\t h - h\| 
&\leq \frac{\|B(\t x)\|}{\mu} \t \delta
+ c(\t x) \t \varepsilon + \frac{L_B L_{\nabla f}}{\mu} \t \varepsilon^2. \qedhere
\end{align*}
\end{proof}

\begin{proof}[Proof of Theorem~\ref{thm:apriori}]
As in the proof of Theorem~\ref{thm:aposteriori} we use Lemma~\ref{lem:bound:linsys} to get \eqref{eq:thm:bounds:step1}.
For the a priori bound, we then invoke Lemma~\ref{lem:bound:df} with $x_1 = x^*$ and $x_2 = \t x$ and apply it to \eqref{eq:thm:bounds:step1},
\begin{align*}
\|\t h - h\| 
&\leq \frac{\|B(\t x)\|}{\mu} \|A(\t x) \t q - \grad f(\t x)\| 
+ \|D(\t x) \grad f(\t x) - D(x^*) \grad f(\t x^*)\| \\
&\leq \frac{\|B(\t x)\|}{\mu} \|A(\t x) \t q - \grad f(\t x)\| 
+ c(x^*) \|\t x - x^*\| + \frac{L_B L_{\nabla f}}{\mu} \|\t x - x^*\|^2.
\end{align*}

Using the a priori estimates $\|A(\t x) \t q - \grad f(\t x)\| \leq \delta$ and $\|\t x - x^*\| \leq \varepsilon$ together with the Lipschitz continuity of $B$ yields
\begin{align*}
\|\t h - h\| 
&\leq \frac{\|B(\t x)\|}{\mu} \|A(\t x) \t q - \grad f(\t x)\| 
+ c(x^*) \|\t x - x^*\| + \frac{L_B L_{\nabla f}}{\mu} \|\t x - x^*\|^2 \\
&\leq \frac{\|B(\t x)\|}{\mu} \delta 
+ c(x^*) \varepsilon + \frac{L_B L_{\nabla f}}{\mu} \varepsilon^2 
\leq \frac{\|B(x^*)\|}{\mu} \delta + \frac{L_B}{\mu} \delta \varepsilon
+ c(x^*) \varepsilon + \frac{L_B L_{\nabla f}}{\mu} \varepsilon^2. \qedhere
\end{align*}
\end{proof}

\subsection{Specialized \rerevision{a priori bounds}} \label{sec_specialized_bounds}
The above framework is generic in that no specific algorithms are required for the lower-level solver and linear system solver.
Our a posteriori bounds are completely solver independent, because they use $\|\grad g(\t{x})\|$ and $\|A(\t{x})\t{q}-\grad f(\t{x})\|$  as the key error metrics, which are always available from the solver.
However our a priori bounds are solver-dependent, based on their specific convergence rates.
We now give some concrete examples of the a priori bounds for specific solver choices.

For the lower-level solver, we require $\|\t{x}-x^*\| \leq \varepsilon$.
In terms of the iteration count $k$, for gradient descent \eqref{eq_x_gd} with the optimal stepsize $\alpha=2/(L+\mu)$ we have
\cite[Lemma 6]{Mehmood2020}
\begin{align}
    \|x^{(k)}-x^*\| \leq (\lambda_{\text{GD}}^*)^k \|x^{(0)}-x^*\|, \label{eq_gd_convergence_1}
\end{align}
for all $k$, where $\lambda_{\text{GD}}^* = (L-\mu)/(L+\mu)$.
Alternatively, if we use heavy ball \eqref{eq_x_hb} with the optimal stepsize $\alpha=4/(\sqrt{L}+\sqrt{\mu})^2$ and momentum $\beta=(\lambda_{\text{HB}}^*)^2$, where $\lambda_{\text{HB}}^*:=(\sqrt{L}-\sqrt{\mu})/(\sqrt{L}+\sqrt{\mu})$, we have the following \cite[Lemma 13]{Mehmood2020}: for all $\gamma>0$, there exists $c>0$ such that
\begin{align}
    \|x^{(k)}-x^*\| \leq c(\lambda_{\text{HB}}^*+\gamma)^k \|x^{(0)}-x^*\|, \label{eq_hb_convergence_1}
\end{align}
for all $k$.
As a final example for the lower-level solver, we consider FISTA adapted for strongly convex problems, \cite[Algorithm 5]{Chambolle2016}.
Combining  \cite[Theorem 4.10]{Chambolle2016} with the identity $\|x-x^*\|^2 \leq (2/\mu)[g(x)-g(x^*)]$ (e.g.\revision{,}~\cite[Theorem 2.1.7]{Nesterov2004}) gives
\begin{align}
    \|x^{(k)}-x^*\|^2 \leq \min\left\{\left(1+\frac{\sqrt{\mu}}{\sqrt{L}}\right)(\lambda_{\text{FISTA}}^*)^k, \frac{4}{(k+1)^2}\right\} \frac{L}{\mu} \|x^{(0)}-x^*\|^2, \label{eq_fista_convegence_1}
\end{align}
where $\lambda_{\text{FISTA}}^*:=1-\sqrt{\mu/L}$.
Of these results, although heavy ball and FISTA both have an accelerated linear rate compared to gradient descent, we do have $\lambda_{\text{FISTA}}^* > \lambda_{\text{HB}}^*$, with a larger difference for well-conditioned problems.

For the linear system solver, our goal is to make $\|A(\t{x})q-\grad f(\t{x})\|$ small by minimizing $\Phi(q) = \frac{1}{2}q^T A(\t{x})q - \grad f(\t{x})^T q$.
We note that $A(\t{x})q-\grad f(\t{x}) = \grad \Phi(q)$, and $\Phi$ is $\mu$-strongly convex and has $L$-Lipschitz gradients. 
We can combine the above lower-level solver results with the identity $\mu\|q^{(k)}-q^*\| \leq \|A(\t{x})q-\grad f(\t{x})\| \leq L\|q^{(k)}-q^*\|$ where $q^* = A(\t{x})^{-1} \grad f(\t{x})$, and if we take $q^{(0)}=0$ then the initial residual is $\|\grad f(\t{x})\|$, we have
\begin{align}
\|A(\t{x})q^{(k)}-\grad f(\t{x})\| \leq \frac{L}{\mu} (\lambda_{\text{GD}}^*)^k \|\grad f(\t{x})\|,
\end{align}
for gradient descent, and for heavy ball: for all $\gamma>0$, there exists $c>0$ such that
\begin{align}
    \|A(\t{x})q^{(k)}-\grad f(\t{x})\| \leq c \frac{L}{\mu} (\lambda_{\text{HB}}^*+\gamma)^k \|\grad f(\t{x})\|. \label{eq_hb_convergence_2}
\end{align}
Lastly, we consider the a priori convergence rate of CG.
Combining the standard linear convergence rate in $\|q^{(k)}-q^*\|_{A}$ (e.g.\revision{,}~\cite[eq.~(5.36)]{Nocedal2006}) with the Rayleigh quotient inequalities $\mu\|y\|^2 \leq \|y\|_A^2 \leq L\|y\|^2$ we get the rate
\begin{align}
	\|A(\t{x}) q^{(k)} - \grad f(\t{x})\| \leq  2\frac{L^{3/2}}{\mu^{3/2}} (\lambda_{\text{HB}}^*)^k \|\grad f(\t{x})\|,
\end{align}
and we recover the same accelerated linear rate as heavy ball momentum.
These results cover the three motivating methods described in \secref{sec_background}.

We specifically note that for the linear solve step, heavy ball has the same accelerated rate as CG, but with an unknown constant, so CG is to be  preferred even without considering the extra convergence theory available for CG (e.g.\revision{,}~finite termination in exact arithmetic).

\section{Numerical \rerevision{results}} \label{sec_numerics}
We now present numerical comparisons of the different hypergradient estimation methods.
Our results have three components: in \secref{sec_numerics_quadratic} we compare the quality of the a priori and a posteriori error bounds, in \secref{sec_numerics_hypercleaning} we show how the choice of hypergradient estimation method impacts the quality of the overall optimization, and lastly in \secref{sec_numerics_icnn} we demonstrate the utility of our approach for learning high-quality neural network regularizers for image denoising.

\subsection{Quality of \rerevision{error bounds}} \label{sec_numerics_quadratic}
We first use a simple example problem to compare the quality of the a priori and a posteriori bounds derived in \secref{sec_bounds}.
The example problem is a simple linear least-squares problem taken from \cite[Section 6.1]{Li2022}:
\begin{subequations}
\begin{align}
    \min_{\theta\in\R^{10}} &\: F(\theta) := \|A_1 x^*(\theta) - b_1\|_2^2, \\
    \text{s.t.} &\: x^*(\theta) := \argmin_{x\in\R^{10}} \|A_2 x+A_3\theta-b_2\|_2^2,
\end{align}
\end{subequations}
where $A_i\in\R^{1000\times 10}$ have random  i.i.d.~entries from $\operatorname{Unif}([0,1])$, and $b_i\in\R^{1000}$ are given by $b_1=A_1\hat{x}_1+0.01y_1$ and $b_2=A_2\hat{x}_2+A_3\t{\theta}+0.01y_2$ where $\hat{x}_1$, $\hat{x}_2$ and $\t{\theta}\in\R^{10}$ have i.i.d.~$\operatorname{Unif}([0,1])$ entries and $y_1,y_2\in\R^{1000}$ are independent standard Gaussian vectors.
For our experiments we pick \revision{the test evaluation point} $\theta$ to be the vector of all ones.

Because of the simple structure of this problem, it is easy to compute $x^*(\theta)$ analytically and get all requisite Lipschitz constants.
Hence we can explicitly compute the true hypergradient $\grad F(\theta)$ and all error bounds explicitly.
The only exception is the a priori bound for HB/IAD+HB, which has the unknown constants $c$ and $\gamma$ in \eqref{eq_hb_convergence_1} and \eqref{eq_hb_convergence_2}. \revision{For illustration, we choose $c=1$ and $\gamma=0$ but there is no guarantee that this will give a true bound and such results are denoted with an asterisk in the figures below.}

In our results, we \revision{compare} the three different lower-level solvers discussed in \secref{sec_specialized_bounds}: GD~\eqref{eq_x_gd}, HB~\eqref{eq_x_hb} and FISTA (adapted for strongly convex problems as per \cite[Algorithm 5]{Chambolle2016}), all with optimal stepsize and momentum parameters.
We also use the three hypergradient methods discussed in \secref{sec_specialized_bounds}, namely CG, GD and HB.
We run the lower-level solvers for up to 100 iterations to get $\t{x} \approx x^*$ (except for \figref{fig_quadratic_ad_comparison} where $\t{x}=x^*$ is used), and the hypergradient solvers for up to 200 iterations.

\begin{figure}[tb]
  \centering
  \begin{subfigure}[b]{0.45\textwidth}
		\includegraphics[width=\textwidth]{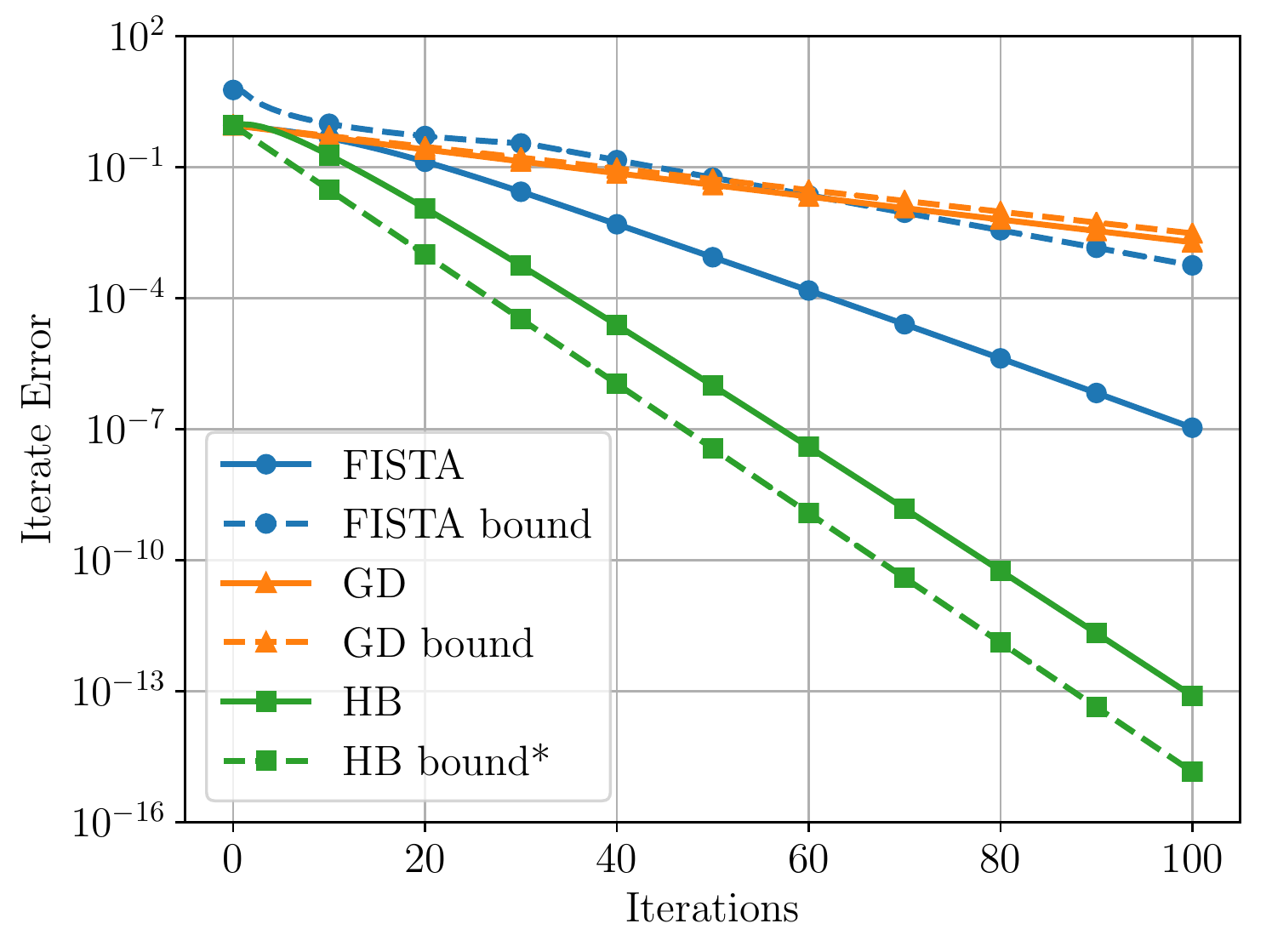}
		\caption{Lower-level solve, a priori bounds.}
	\end{subfigure}
	~
	\begin{subfigure}[b]{0.45\textwidth}
		\includegraphics[width=\textwidth]{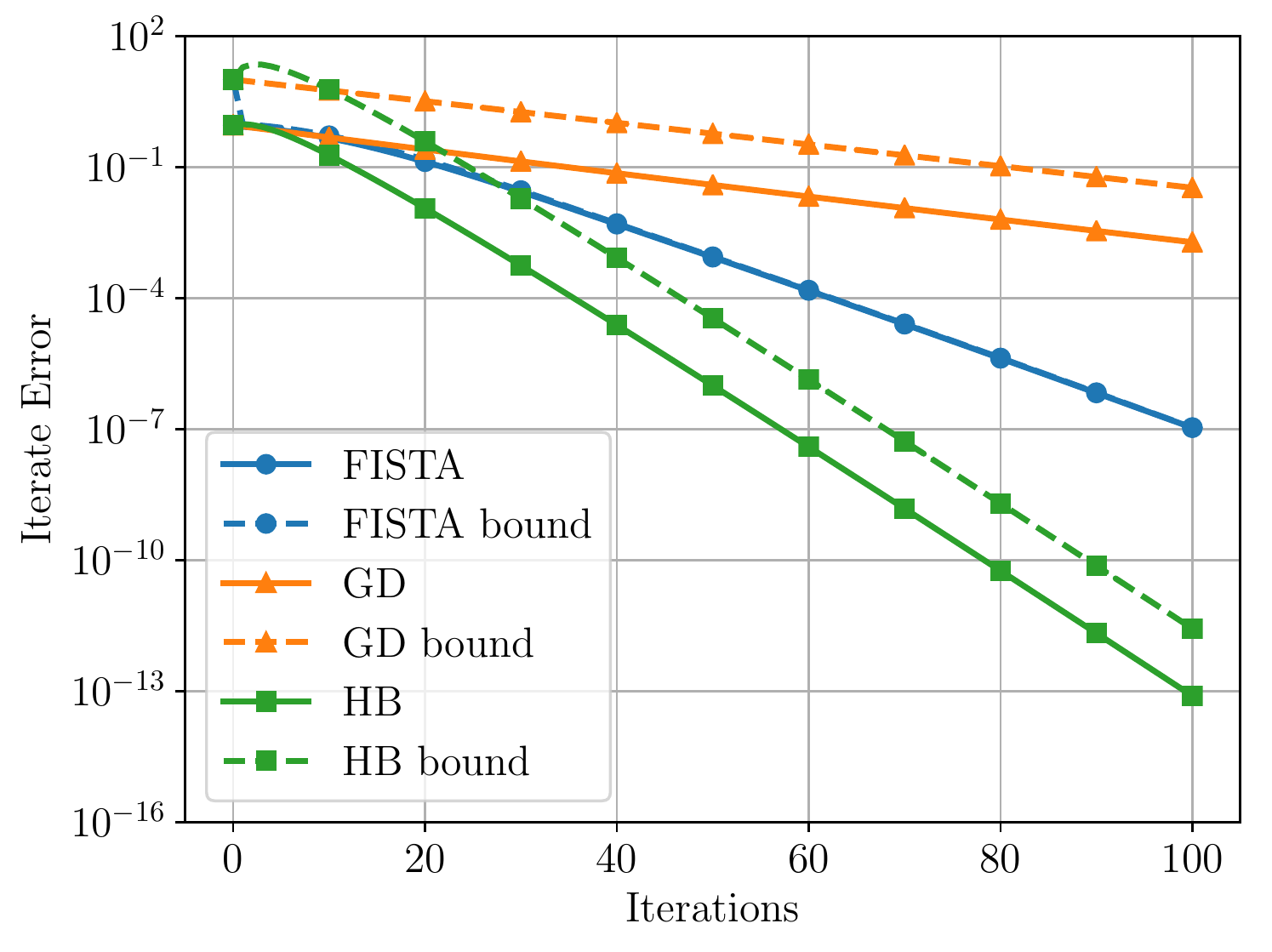}
		\caption{Lower-level solve, a posteriori bounds.}
	\end{subfigure}
	\\
	\begin{subfigure}[b]{0.45\textwidth}
		\includegraphics[width=\textwidth]{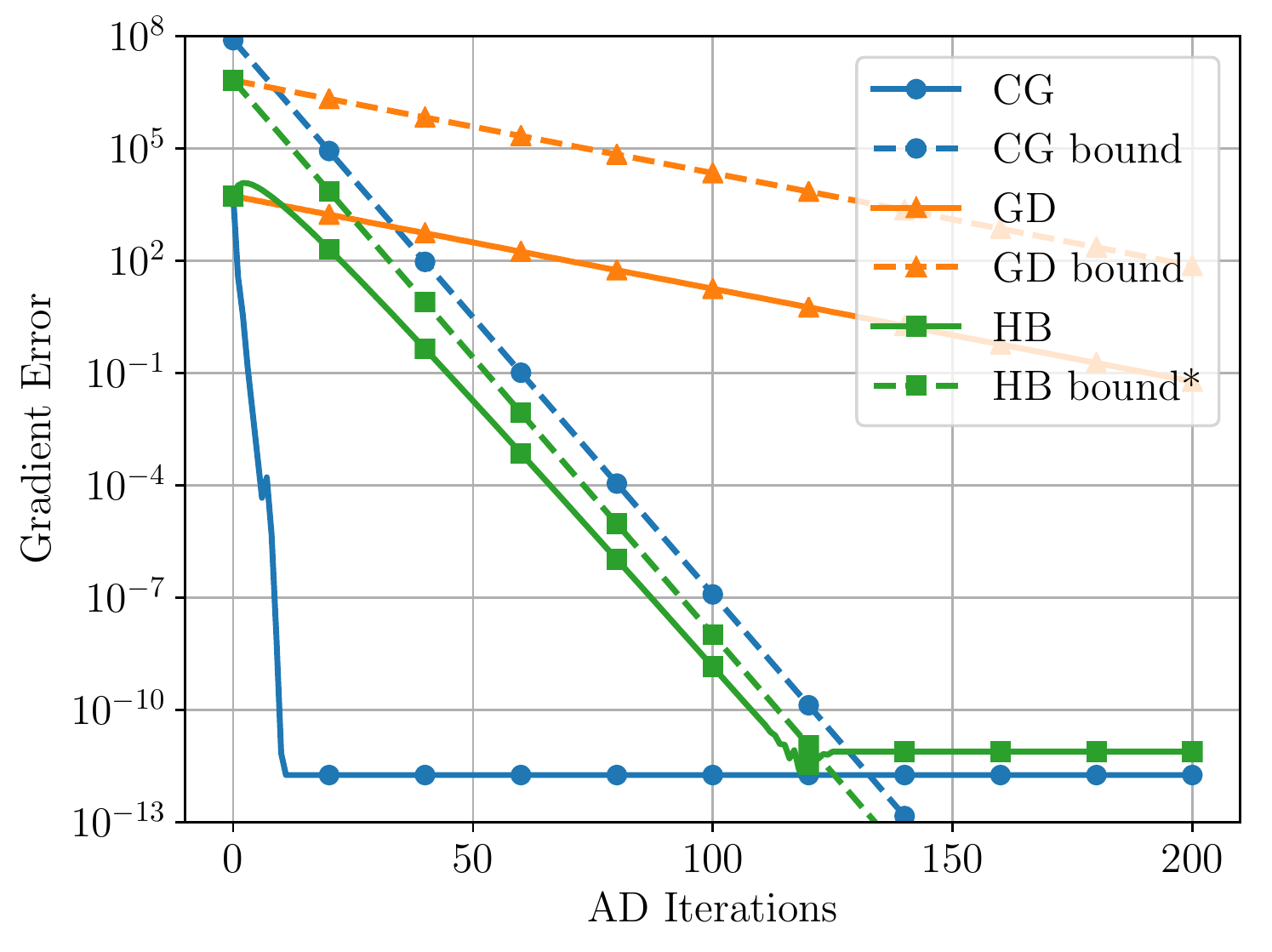}
		\caption{AD comparison, a priori bounds (exact $\t{x}=x^*(\theta)$)}
	\end{subfigure}
	~
	\begin{subfigure}[b]{0.45\textwidth}
		\includegraphics[width=\textwidth]{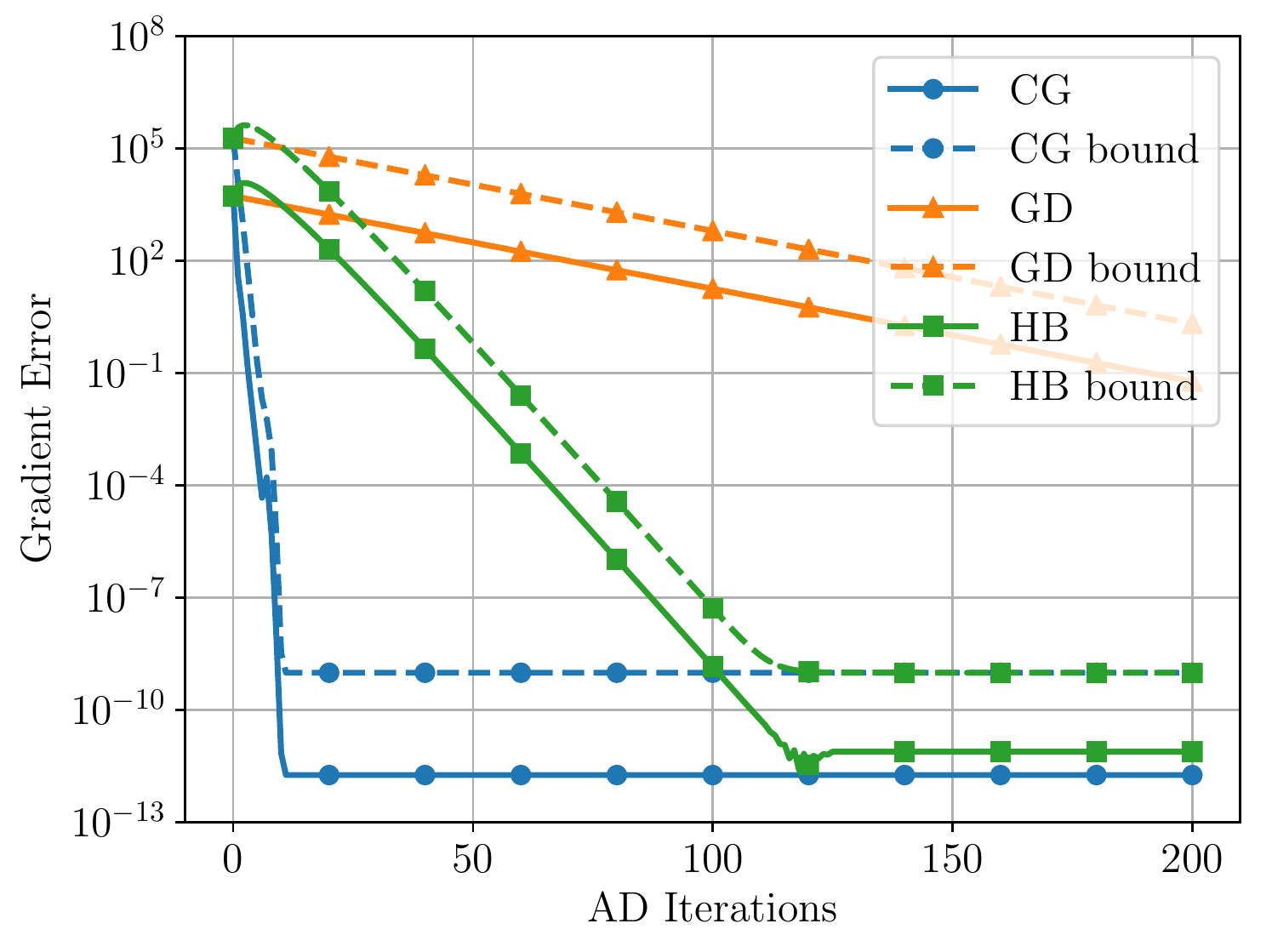}
		\caption{AD comparison, a posteriori bounds (exact $\t{x}=x^*(\theta)$)}
	\end{subfigure}
  \caption{Simple quadratic AD comparison. *The a priori bound for heavy ball uses $c=1$ and $\gamma=0$ in \eqref{eq_hb_convergence_1} for (a,b) and \eqref{eq_hb_convergence_2} for (c,d), but in reality these constants are not known and so there is no guarantee that this will actually be an upper bound on the error. \revision{Note: in (b), the FISTA bound is very tight and is almost on top of the true FISTA error.}}
  \label{fig_quadratic_ad_comparison}
\end{figure}

Firstly, \rerevision{Figures~\ref{fig_quadratic_ad_comparison}(a,b) show} the a priori and a posteriori bounds on the lower-level solvers, where the a priori bounds are from the standard linear convergence rates for the lower-level solvers (i.e.~\eqref{eq_gd_convergence_1}, \eqref{eq_hb_convergence_1} and \eqref{eq_fista_convegence_1} for GD, HB and FISTA respectively) and the a posteriori results use $\|\t{x}-x^*\| \leq \|\grad g(\t{x})\|/\mu$.
As in \cite[Figure 3]{Ehrhardt2021}, we find that the a posteriori bounds are much tighter for FISTA (and HB given that the a priori bounds are uncomputable), although the a priori bounds are better for the slowest method, GD.

\rerevision{Figures~\ref{fig_quadratic_ad_comparison}(c,d) then show} the a priori and a posteriori bounds on the hypergradient estimates, using the exact value $\t{x}=x^*$ (i.e.~$\varepsilon=0$).
We see that the a posteriori bounds are significantly tighter for CG and GD (and are the only valid option for HB).
Furthermore, the a priori bounds are not always valid once the hypergradient error is small enough that rounding errors are significant, whereas the a posteriori bounds automatically handle this.

We also consider the same results as \rerevision{Figures~\ref{fig_quadratic_ad_comparison}(c,d)}, but where $\t{x} \neq x^*$ (i.e.~$\varepsilon>0$).
These results are shown in \figref{fig_quadratic_ad_comparison_by_epsilon} in Appendix~\ref{sec_extra_numerics}.
Specifically, the fastest lower-level solver (HB) was run for $N\in\{20,60,100\}$ iterations and $\t{x}$ was taken as the final iterate, corresponding to $\varepsilon\in\{1.1\times 10^{-2}, 3.9\times 10^{-8}, 7.7\times 10^{-14}\}$ respectively.
Here, we see that the a priori bounds are tighter for large $\varepsilon$, but the a posteriori bounds become more useful as $\varepsilon\to 0$.

Lastly, \figref{fig_quadratic_ad_comparison_by_epsilon_2} shows the true hypergradient errors from \figref{fig_quadratic_ad_comparison_by_epsilon} in Appendix~\ref{sec_extra_numerics}, but comparing the overall gradient error against total computational work (measured as the sum of lower-level iterations and hypergradient iterations), for different levels of lower-level solve accuracy.
Here, we are interested in considering how to allocate a given budget of computational resources between producing more accurate lower-level solves and more accurate hypergradients.
We see that there is a genuine trade-off that must be considered: larger $N$ for more accurate lower-level solves ultimately can give significantly more accurate hypergradients, but for very small budgets a smaller $N$ should be used to allow the hypergradient iteration to run for sufficiently long.
The trade-off that appears here aligns with the necessary balance between $\varepsilon$ and $\delta$ inherent in our a priori bound (\thmref{thm:apriori}).

\begin{figure}[tb]
  \centering
	\begin{subfigure}[b]{0.45\textwidth}
		\includegraphics[width=\textwidth]{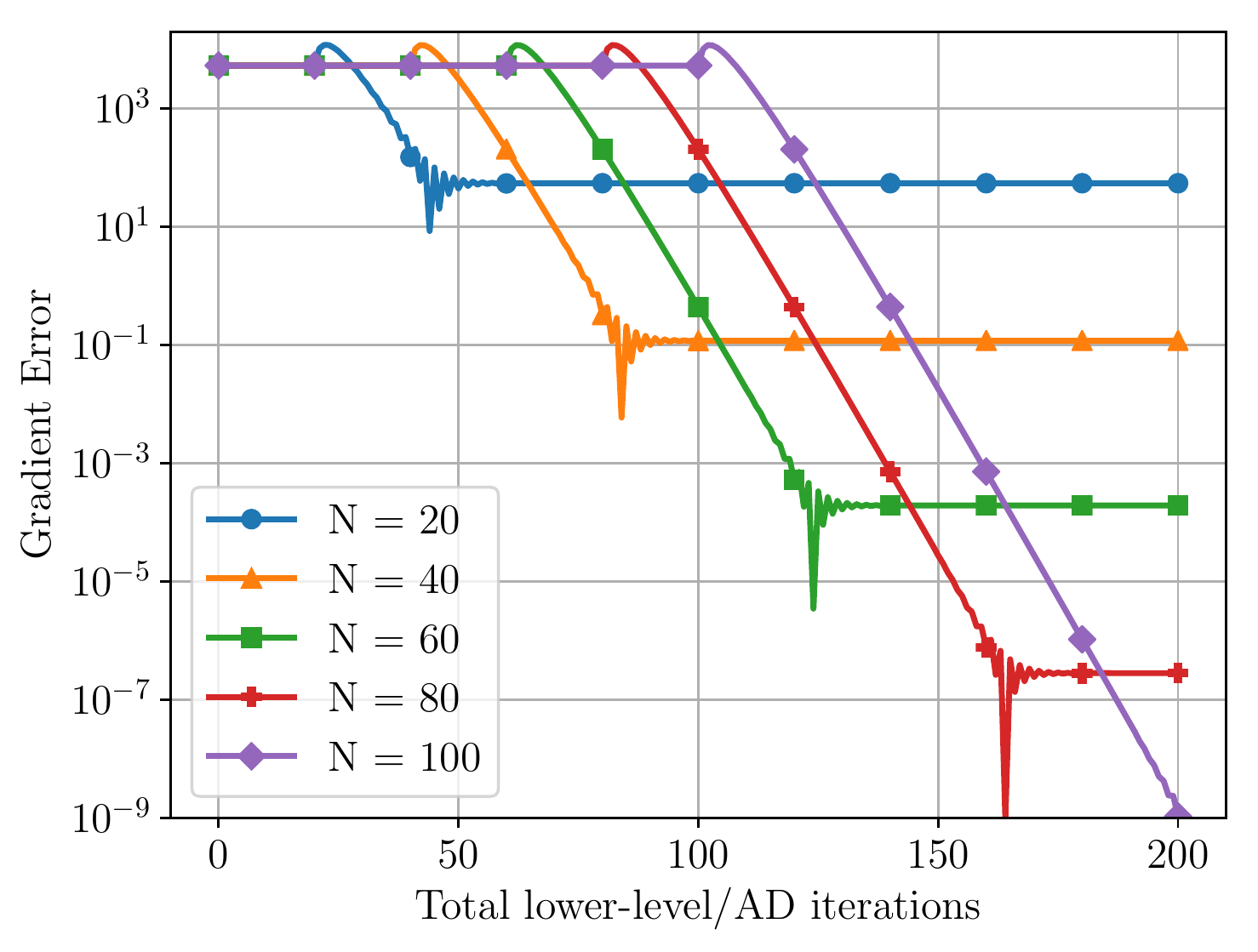}
		\caption{Using HB for AD method}
	\end{subfigure}
	~
	\begin{subfigure}[b]{0.45\textwidth}
		\includegraphics[width=\textwidth]{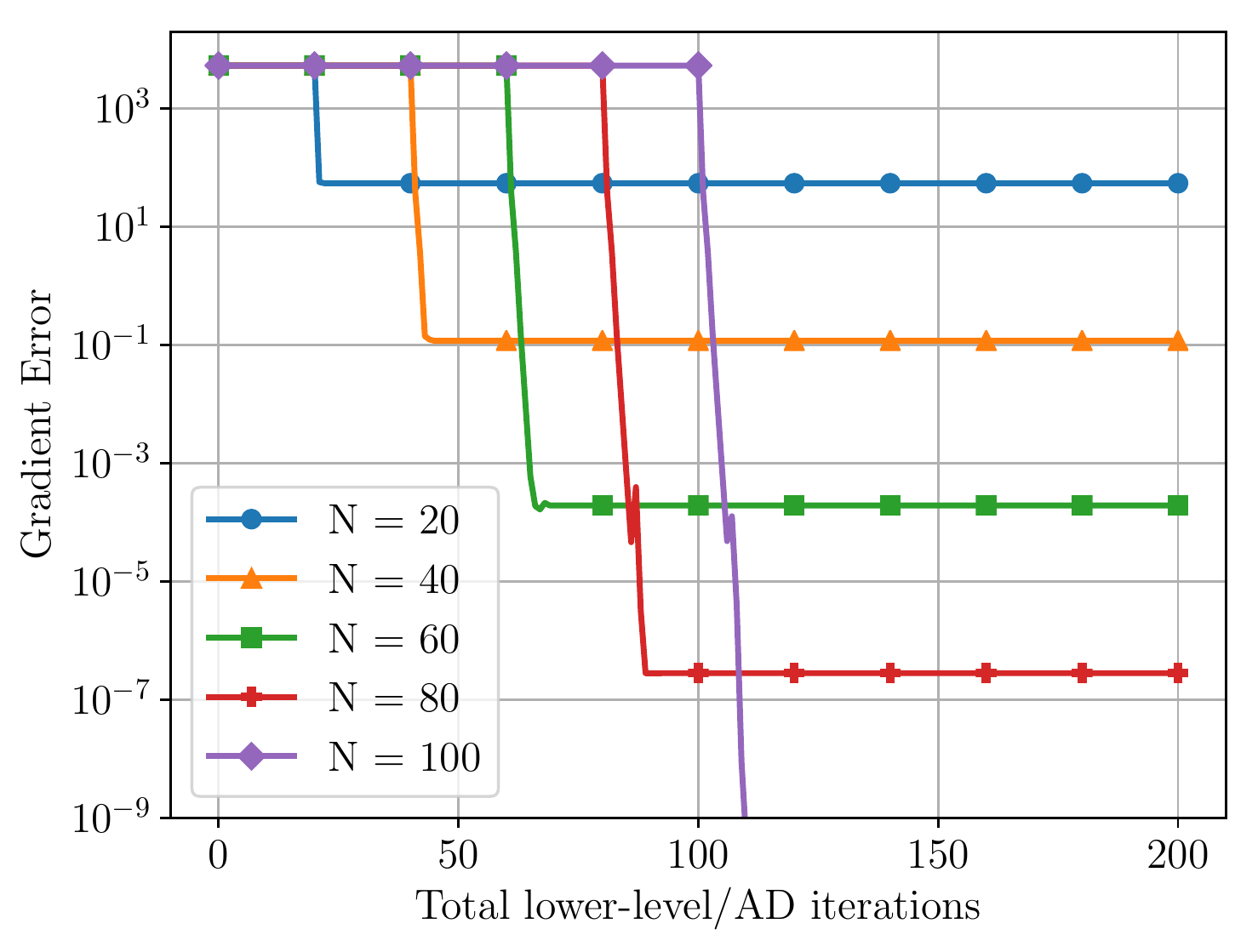}
		\caption{Using CG for AD method}
	\end{subfigure}
  \caption{Simple quadratic problem: comparing actual gradient error versus total computational work (lower-level solve plus hypergradient iterations) for different accuracies of lower-level solve ($N$ is the number of heavy ball iterations used to compute $\t{x}$).} \label{fig_quadratic_ad_comparison_by_epsilon_2}
\end{figure}

\subsection{Impact of Hypergradient Method on Optimization Quality} \label{sec_numerics_hypercleaning}
We now consider a more realistic example problem to answer the question: how does the choice of hypergradient method affect the quality of the overall bilevel optimization process?
To answer this question we use a data hypercleaning problem from \cite[Appendix B]{Yang2021}.
This process is to learn weights for all training examples in a supervised learning problem, where some training examples have corrupted labels (and so the standard equal weighting is not ideal), by minimizing loss over a validation dataset.
In this case, we consider multi-class logistic regression on MNIST with a cross-entropy loss:
\begin{subequations}
\begin{align}
    \min_{\theta\in\R^{N_{\text{train}}}} & \: F(\theta) := \frac{1}{N_{\text{val}}} \sum_{i=1}^{N_{\text{val}}} \ell(X^*(\theta) x_i^{\text{val}}, y_i^{\text{val}}), \\
    \text{s.t.} \quad & \: X^*(\theta) := \argmin_{X\in\R^{n_c \times d}} \frac{1}{N_{\text{train}}} \sum_{j=1}^{N_{\text{train}}}\sigma(\theta_j) \ell(X x_j^{\text{train}}, y_j^{\text{train}}) + C\|X\|_F^2,
\end{align}
\end{subequations}
where $\ell(y_{\text{est}},y_{\text{true}}): \R^{n_c} \times \R^{n_c} \to \R$ is the cross-entropy loss, and $\sigma(\cdot)$ is the sigmoid function.
We have $n_c=10$ classes, feature size $d=785$, $\ell_2$ penalty $C=0.001$ \revision{(as chosen in \cite{Yang2021})} and dataset sizes $N_{\text{train}}=20000$ and $N_{\text{val}} = 5000$.
A randomly chosen 10\% of the training labels $y_j^{\text{train}}$ are corrupted by choosing an incorrect label uniformly at random.
The goal of the hypercleaning problem is effectively to encourage the lower-level weights $\sigma(\theta_j)\to 0$ where $y_j^{\text{train}}$ is corrupted and $\sigma(\theta_j)\to 1$ otherwise.

As in \secref{sec_numerics_quadratic}, we use GD, HB and FISTA as lower-level solvers and CG, GD and HB as hypergradient algorithms.
To solve the full bilevel problem, we run gradient descent with constant step-size (in this case taking $\alpha=10$) on the upper-level problem using the calculated inexact hypergradients.
\revision{We use warm restarts for the lower-level solver, choosing $x^{(0)}$ in \eqref{eq_x_gd} or \eqref{eq_x_hb} (for example) to be the final value found in the previous iteration (with the previous value of $\theta$).}
Since we are interested in the impact on the full upper-level solve, we show how the upper-level objective $F(\theta)$ decreases as a function of total computational work, taken as the sum of the total lower-level iterations and hypergradient iterations.
We use this measure as each iteration of these  requires one lower-level gradient and one lower-level Hessian-vector product respectively (with a similar cost).\footnote{We ignore the contribution of  Jacobian-vector products in the hypergradient calculation, since there is only one per calculation compared to one Hessian-vector product per iteration of the hypergradient calculations.}

Our results are shown in \figref{fig_hypercleaning}, for the different choices $\varepsilon,\delta\in\{10^{-2},10^{-1}\}$.
We omit the results using GD as a hypergradient algorithm since they are all significantly worse than HB and CG (although we do show results with GD as a lower-level solver).

Comparing lines of the same \revision{shade} (i.e.~same lower-level solver), it is clear that the choice of hypergradient method has a substantial impact on the speed of the overall optimization.
Indeed, our results suggest that the choice of hypergradient algorithm is at least as important as the choice of lower-level solver.
In \figref{fig_hypercleaning}(c,d), it is even the case that using GD as a lower-level solver with CG for hypergradients outperforms using HB for both (i.e.~a non-accelerated lower-level solver with CG can outperform using an accelerated solver for both steps).

In \figref{fig_hypercleaning}(b,d), we see that the fastest solver (HB lower-level/CG hypergradients) plateaus after sufficient time.
This is because the solver has reached a level of accuracy where the first hypergradient iteration $q^{(0)}=0$ has a sufficiently small residual and so a zero hypergradient is returned.
However this is not a fundamental limit: the level of $F(\theta)$ corresponding to the plateau is exceeded in \figref{fig_hypercleaning}(c) by taking a smaller value of $\delta$.
This suggests that a dynamic upper-level algorithm where $\varepsilon$ and $\delta$ are carefully decreased to zero may be a superior method (c.f.~the fixed decrease schedule for the bilevel solver HOAG  \cite{Pedregosa2016}).
The development and analysis of such an approach is delegated to future work.



\begin{figure}[tb]
  \centering
  \begin{subfigure}[b]{0.45\textwidth}
		\includegraphics[width=\textwidth]{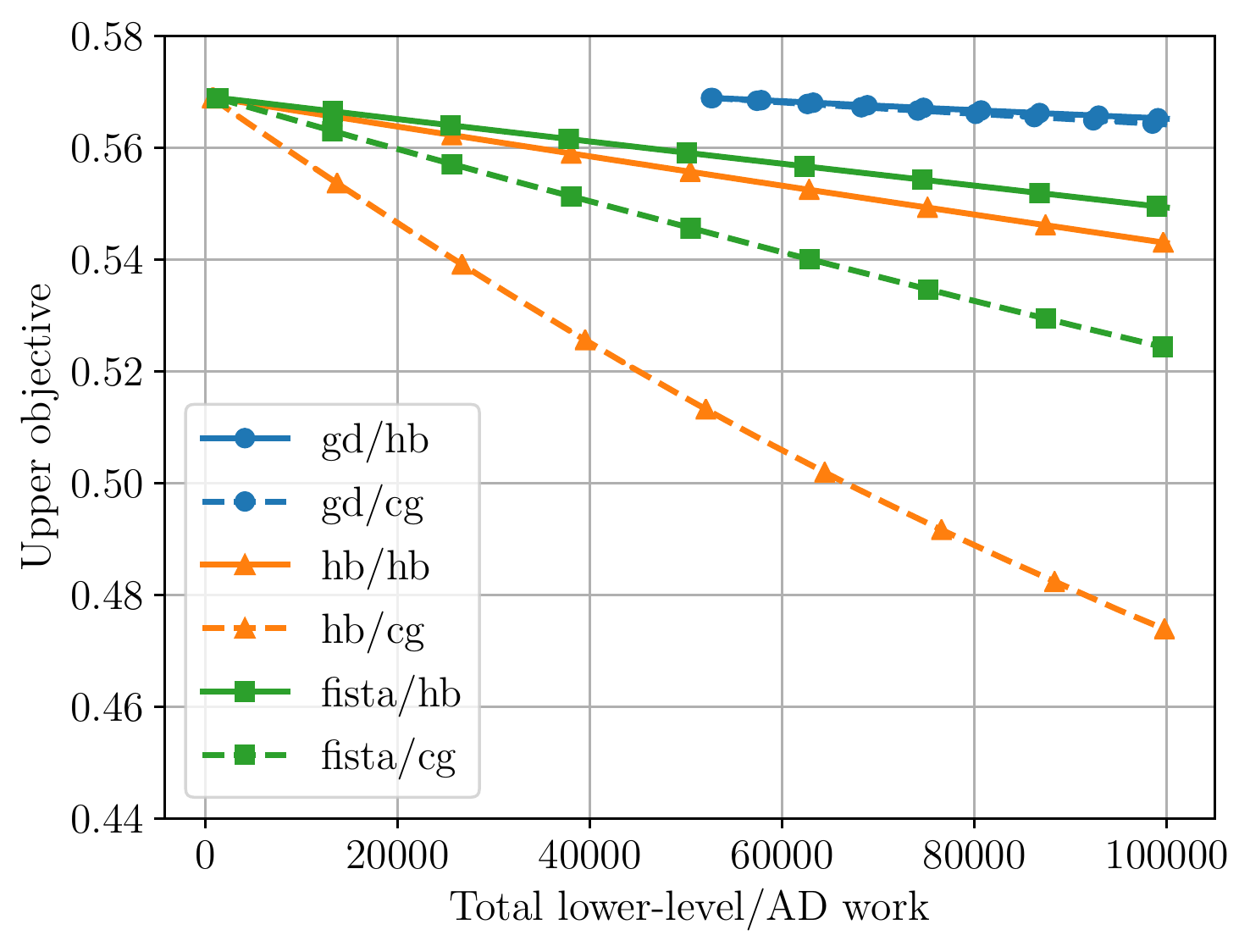}
		\caption{$\varepsilon=0.01$, $\delta=0.01$}
	\end{subfigure}
	~
	\begin{subfigure}[b]{0.45\textwidth}
		\includegraphics[width=\textwidth]{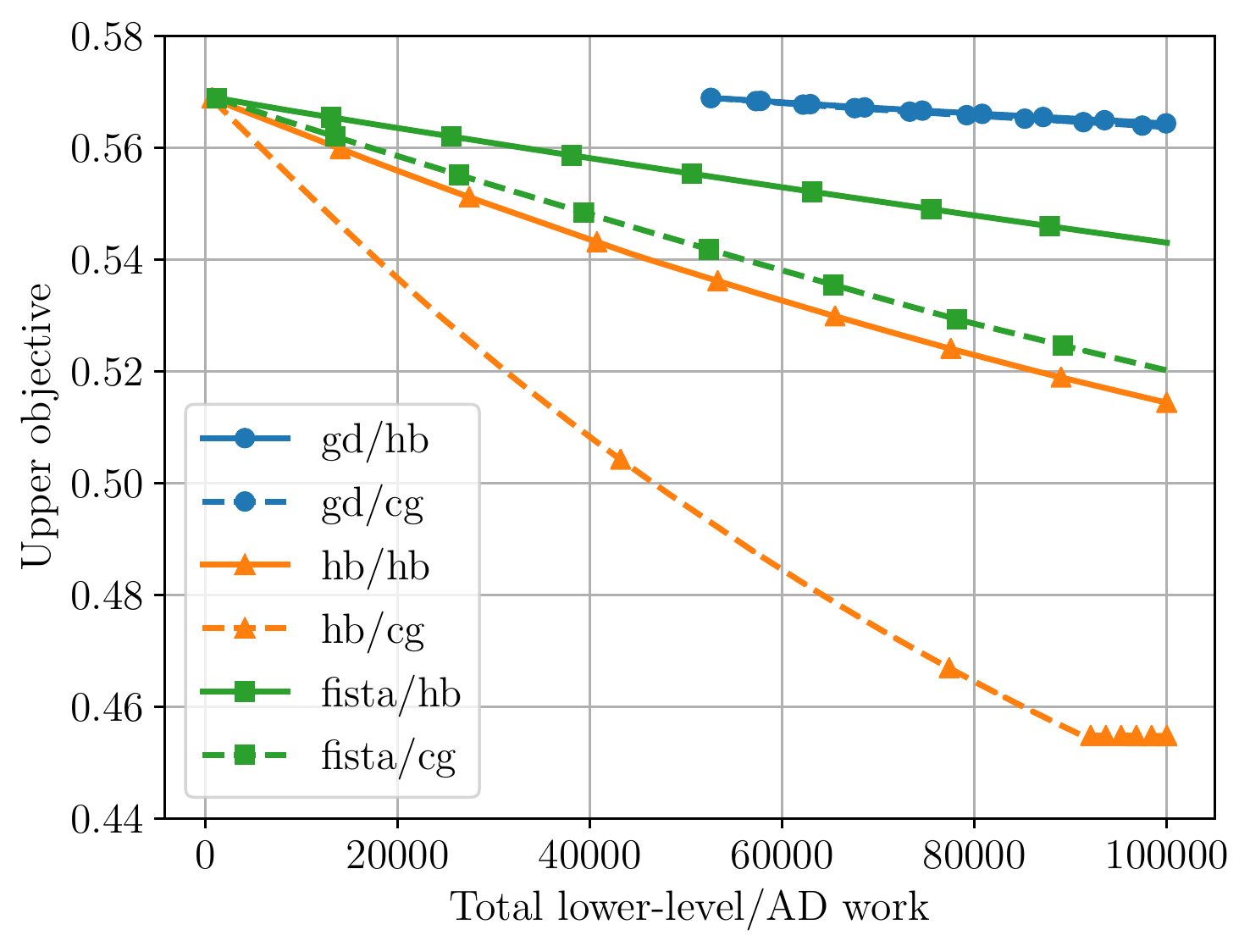}
		\caption{$\varepsilon=0.01$, $\delta=0.1$}
	\end{subfigure}
	\\
	\begin{subfigure}[b]{0.45\textwidth}
		\includegraphics[width=\textwidth]{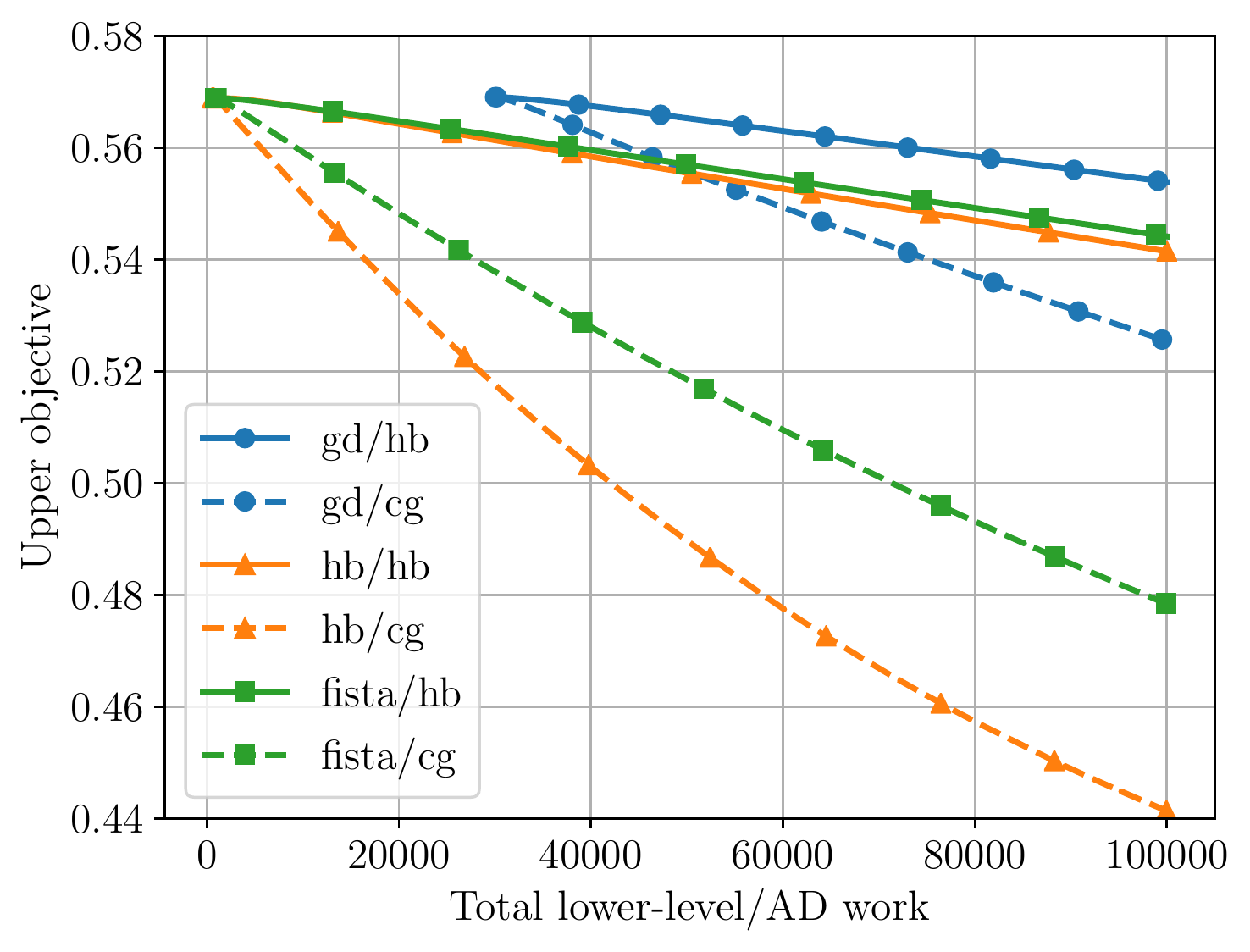}
		\caption{$\varepsilon=0.1$, $\delta=0.01$}
	\end{subfigure}
	~
	\begin{subfigure}[b]{0.45\textwidth}
		\includegraphics[width=\textwidth]{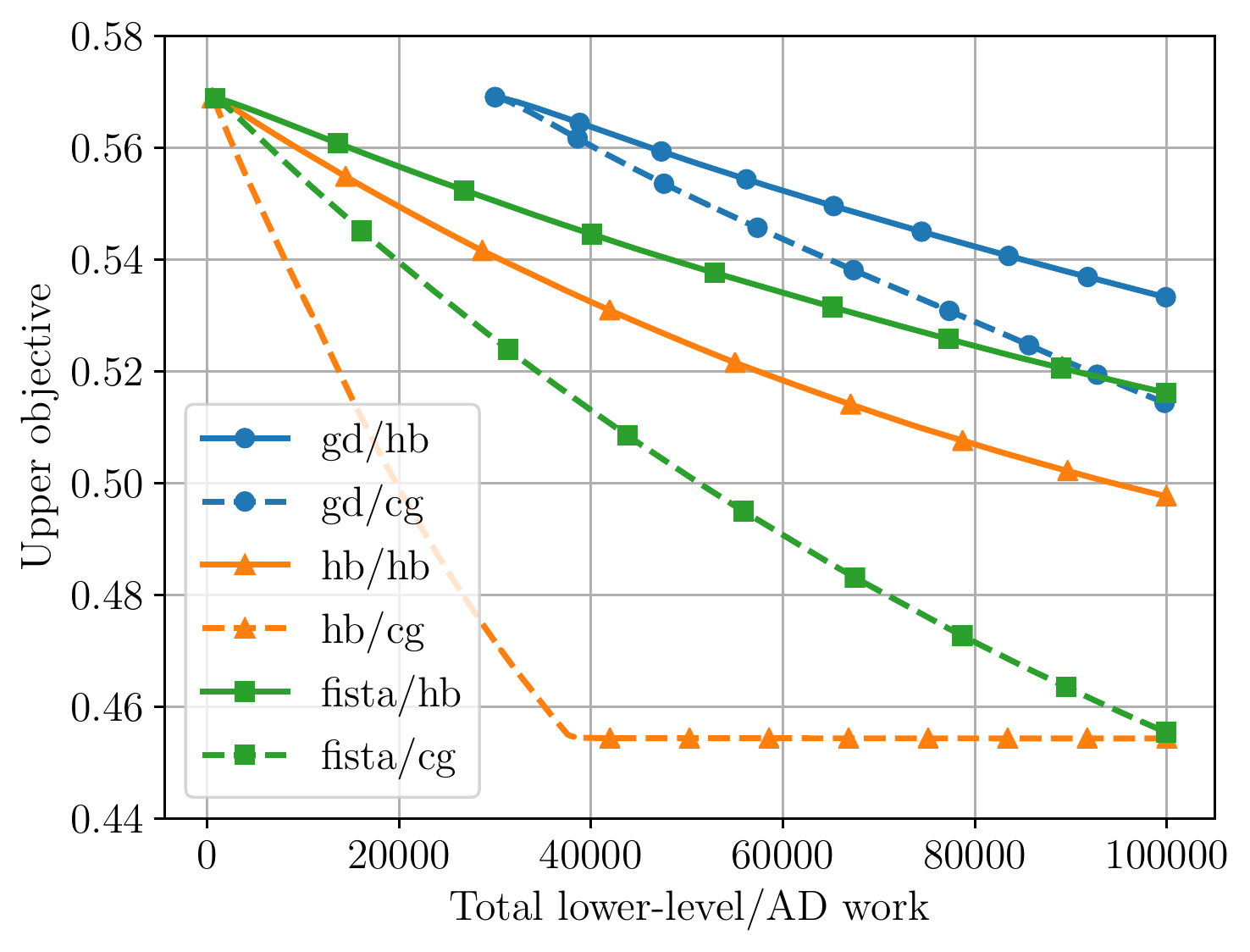}
		\caption{$\varepsilon=0.1$, $\delta=0.1$}
	\end{subfigure}
  \caption{Data hypercleaning results: upper-level objective achieved for a given amount of total work (cumulative lower-level iterations plus AD iterations) for different combinations of lower-level solver and AD method.}
  \label{fig_hypercleaning}
\end{figure}


\subsection{Quality of Learned Regularizer} \label{sec_numerics_icnn}
Lastly, we include an example demonstrating that our approach is capable of learning interesting regularizers for image denoising that outperform standard methods.

Here, our test problem is image denoising using variational regularization where the regularizer is an input-convex neural network (ICNN) \cite{Amos2017inputconvex, Mukherjee2020inputconvex}. 
Thus, the lower-level problems $g_i$ \eqref{eq_x_theta} have the form 
$g_i(y,\theta) := \|y-z_i\|^2 + R(y,\theta).$
In more detail, we take $R(y,\theta)$ to be a linear combination of an input-convex neural network and an $\ell_2$ penalty, i.e.
\begin{align}
    R(y,\theta) = \revision{\log(1+e^{\theta_1})} S(y,\theta_{3:\text{end}}) + \revision{\log(1+e^{\theta_2})} \|y\|^2. \label{eq_nn_reg}
\end{align}
The map $S$ is a shallow neural network which comprises of a single convolutional layer (with kernel length 3 and two output channels) with a softplus activation function, followed by an averaging output layer.
\revision{The use of the softplus function $t\mapsto \log(1+e^t)$ in  \eqref{eq_nn_reg} is to ensure non-negativity of the coefficients and thus convexity of $R$ in $y$.}
With this architecture, we have $n=8$ parameters $\theta$ to learn. Our implementation of this regularizer is based on \cite{Mukherjee2020inputconvex}.\footnote{\url{https://github.com/Subhadip-1/data_driven_convex_regularization}}

We use a least-squares loss as the upper-level objective \eqref{eq_upper_level}, i.e. $f_i(y) := \|y-x_i\|^2$,
where we have training data pairs $x_i,z_i\in\R^d$, corresponding to ground truth images $x_i$ and noisy images $z_i$.
Throughout, our dataset comprises true images $x_i\in\R^{d}$ for $d=64$ which are discontinuous and piecewise linear with 4 segments, each with a random slope generated from $\operatorname{Unif}([-10,10])$ and with a jump of size $\operatorname{Unif}([-1,1])$ between segments.
The noisy data $z_i$ come from perturbing $x_i$ with independent component-wise noise drawn from $\operatorname{Unif}([-\delta,\delta])$ with $\delta = \tfrac {0.2} d \sum_{j=1}^{d} |x_j|$.
To reflect a realistic imaging setting where full uncorrupted data acquisition (i.e.~collecting suitable $x_i$) is generally difficult, we consider a setting where we have $m=10$ training and 10 test images.
An example image may be seen in \figref{fig_test2_direct_compare}.

We will compare the ICNN \eqref{eq_nn_reg} with the classical total variation (TV) regularizer $R(y,\theta) = \theta \operatorname{TV}(y)$ as implemented in \cite{Ehrhardt2021}. The hyperparameter to be learned is the regularization weight $\theta>0$ and $\operatorname{TV}(y) := \sum_j \|\hat{\grad} y_j\|$ is the discretized total variation, i.e.~$\hat{\grad} y_j$ is a forward difference approximation to the gradient of $y$ at pixel $j$.
This regularizer is built around the prior that $y$ is approximately piecewise constant.

We now demonstrate that the considered bilevel learning framework with the optimized gradient estimates can produce high-quality learned input-convex neural net regularizers for image denoising.

We ran the same bilevel solver (gradient descent) as in \secref{sec_numerics_hypercleaning}, now with a constant upper-level stepsize of 0.01.
All hypergradients were calculated using FISTA as the lower-level solver and CG to solve the IFT linear system with $\varepsilon=\delta \in\{10^{-2}, 10^{-3}, 10^{-4}\}$ constant.
All solvers were run for 6 days.

\begin{figure}[tb]
  \centering
  \includegraphics[width=0.45\textwidth]{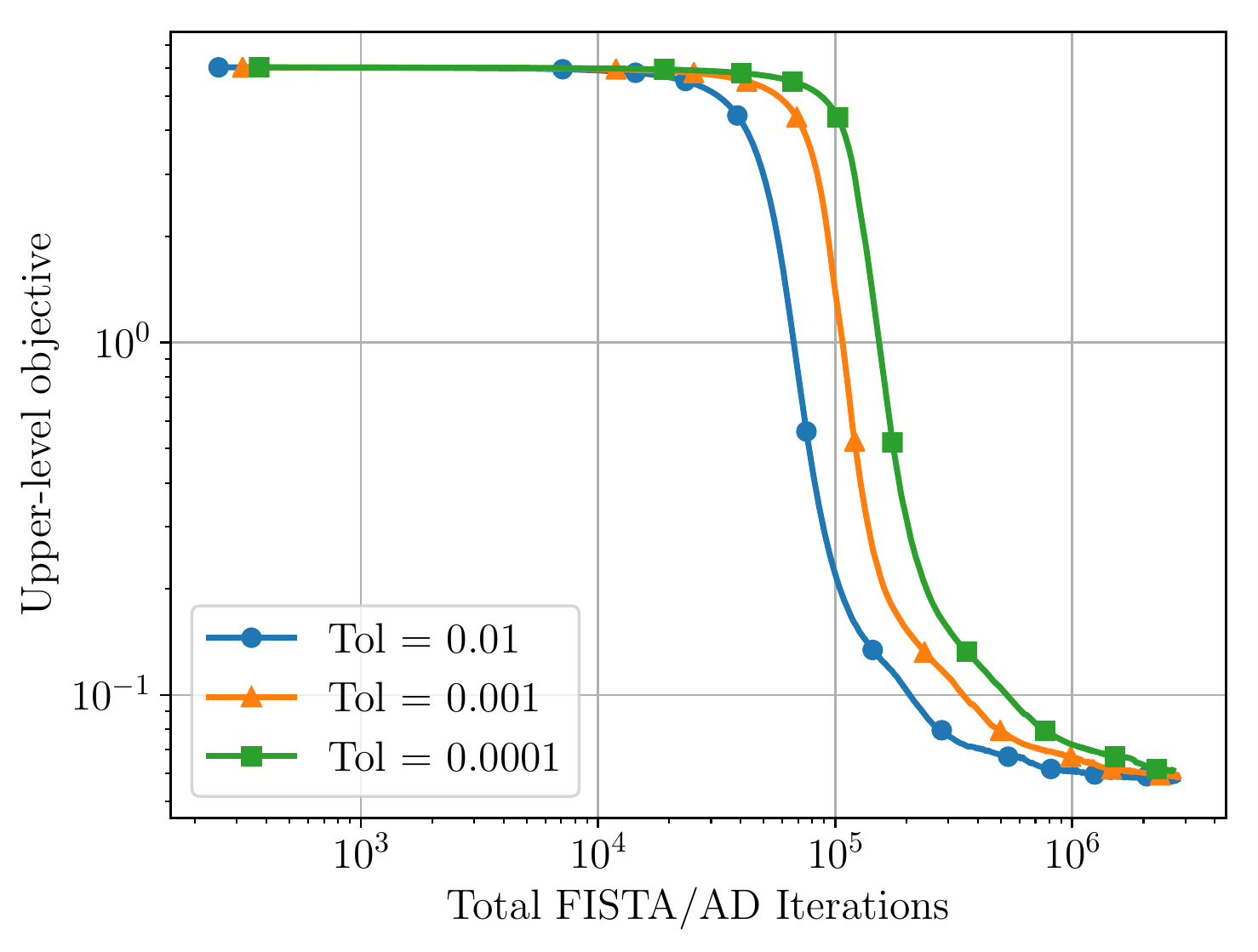}~
  \includegraphics[width=0.45\textwidth]{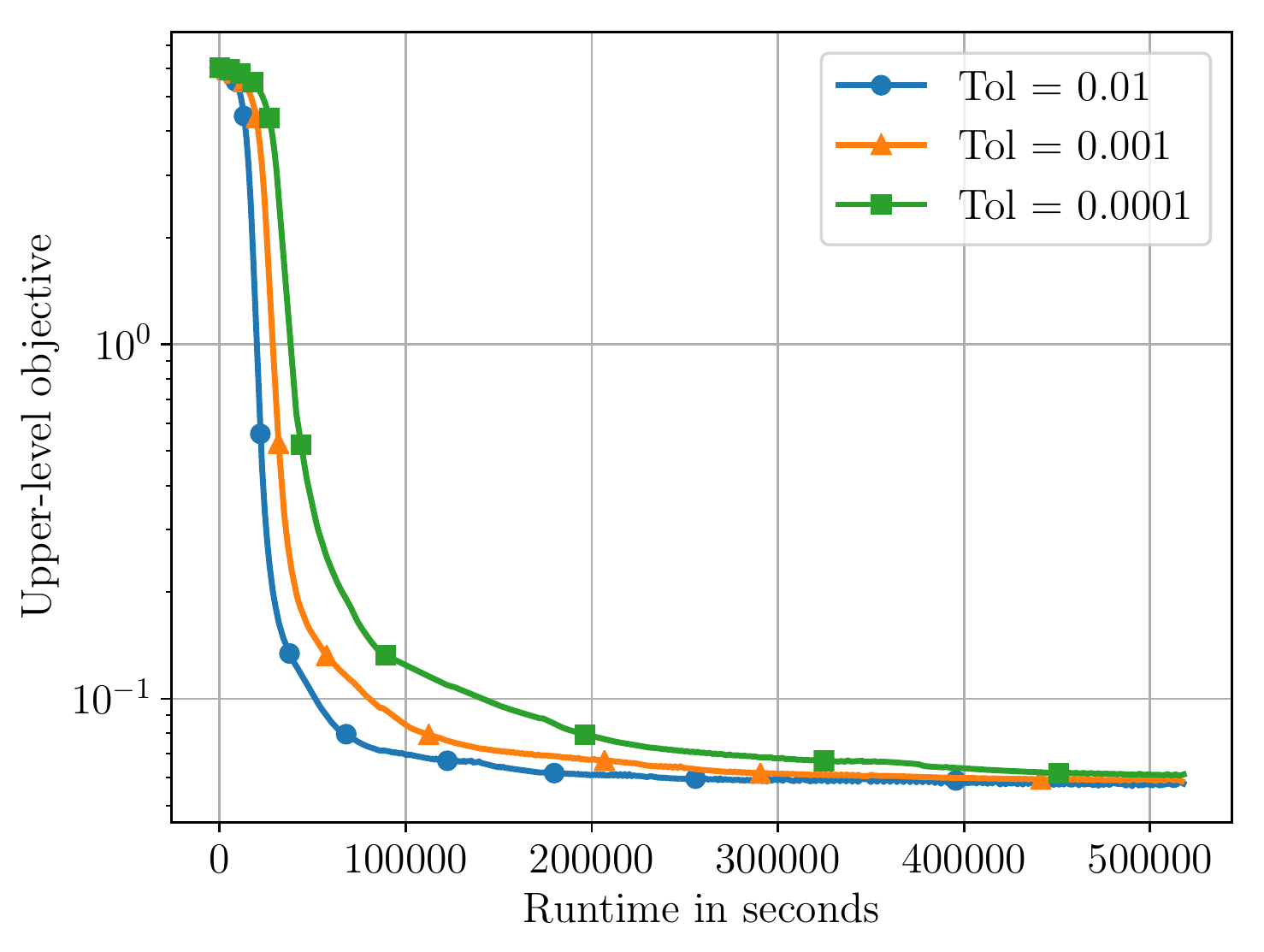}
  \caption{Convergence of bilevel solver with tolerance $\varepsilon=\delta$ constant. It can be seen that computing hypergradients with higher accuracy does not lead to an overall efficient algorithm.}
  \label{fig_bilevel_tv_obj}
\end{figure}

The resulting upper-level objective decrease (versus computational budget) is shown in \figref{fig_bilevel_tv_obj}.
Separately, we also use the derivative-free approach of \cite{Ehrhardt2021} to tune the TV regularizer weight ($\theta\approx 0.026$ being optimal for this training dataset).
Using the best learned hyperparameters $\theta$ from each method, the resulting training and test losses are shown in \tabref{tab_bilevel_tv}. We see that the proposed bilevel framework with $\varepsilon=\delta=10^{-2}$ can produce an improved training loss and test loss than a highly tuned version of the custom-designed TV regularizer.
By comparison, the input-convex neural net regularizer had no a prior information about the dataset.
\revision{We conclude by noting that we would expect the training loss to be strictly decreasing as $\varepsilon=\delta$ is reduced, which is not observed in \tabref{tab_bilevel_tv}. This can be attributed to our finite computational budget, and shows the practical trade-off between high accuracy and practical efficiency.}

\begin{table}
  \caption{Quantitative comparison between TV and ICNN. Training and test losses of tuned regularizers from different frameworks. ICNN with lowest accuracy hypergradients lead to the smallest test loss.}\vspace*{3mm}
  \label{tab_bilevel_tv}
  \centering
  \begin{tabular}{cccc}
    \hline
    Method     & Training Accuracy & Training loss     & Test loss \\
    \hline
    TV & & 0.0601  & 0.0558     \\[1mm]
    ICNN & $\varepsilon=\delta=10^{-2}$     & \textbf{0.0567} & \textbf{0.0552}      \\
    & $\varepsilon=\delta=10^{-3}$     & 0.0586       & 0.0569\\
    & $\varepsilon=\delta=10^{-4}$     & 0.0607       & 0.0582  \\
    \hline
  \end{tabular}
\end{table}

Interestingly, TV and ICNN with optimized parameters have very different properties.
In \figref{fig_test2_direct_compare} we show the two reconstructions for an example image from the test dataset with similar upper-level losses.
Clearly the TV regularizer yields reconstructions which have a strong piecewise constant preference, but the ICNN regularizer produces much smoother reconstructions with occasional jumps.

\begin{figure}[tb]
  \centering
  \begin{subfigure}[b]{0.45\textwidth}
		\includegraphics[width=\textwidth]{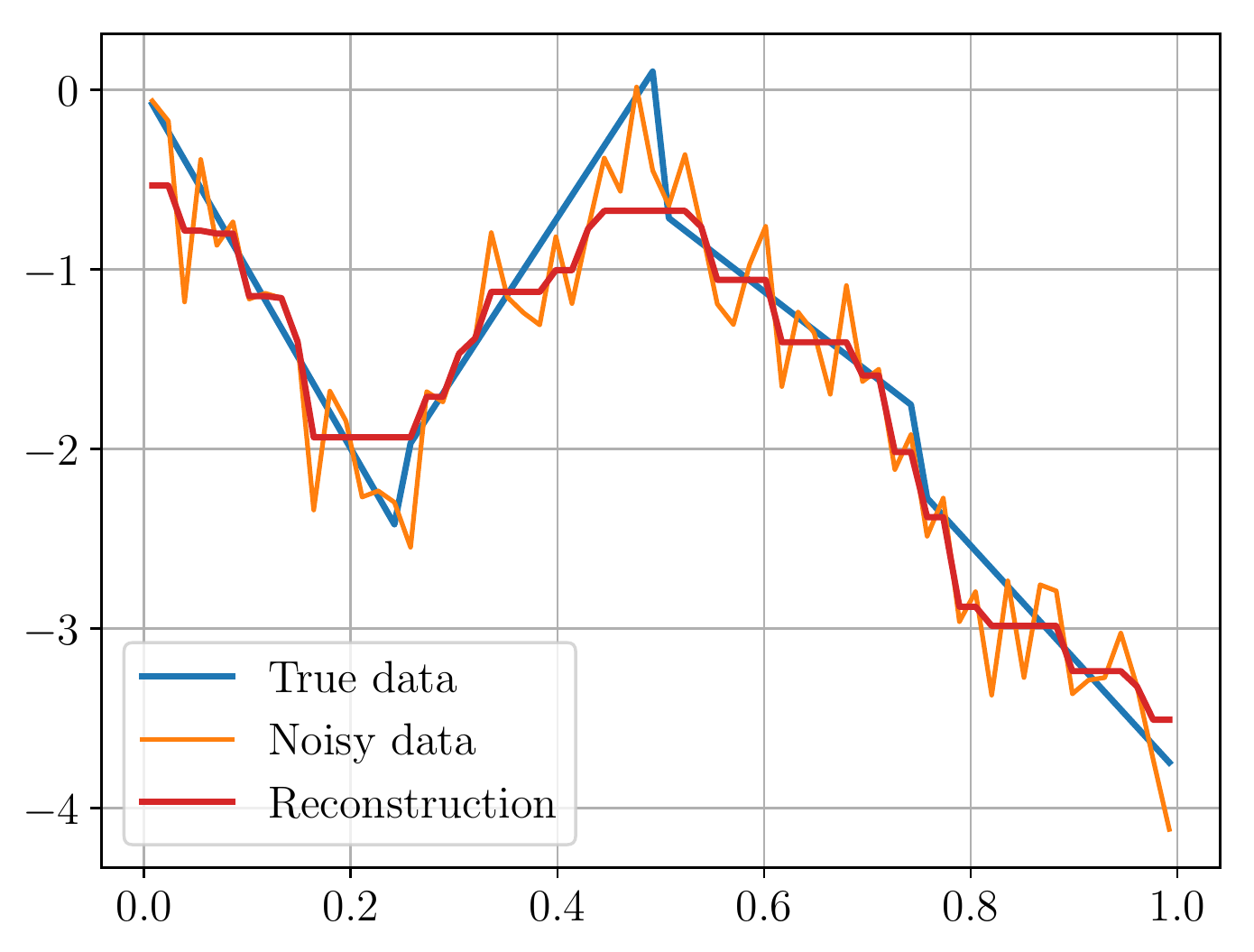}
		\caption{TV (loss 0.0601)}
	\end{subfigure}
	~
	\begin{subfigure}[b]{0.45\textwidth}
		\includegraphics[width=\textwidth]{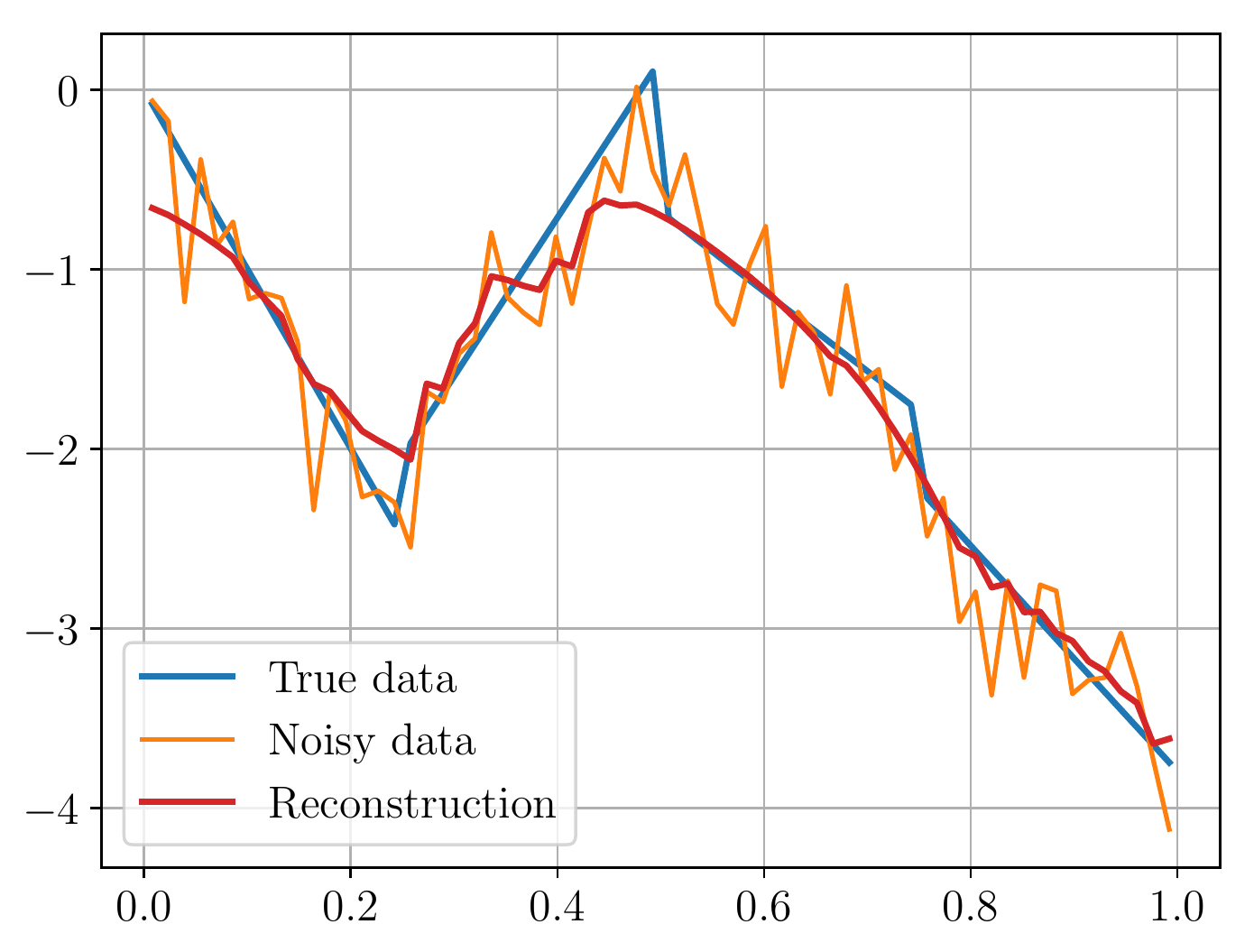}
		\caption{ICNN $\varepsilon=\delta=10^{-2}$ (loss 0.0497)}
	\end{subfigure}
  \caption{Qualitative comparison between TV and ICNN regularization using final tuned parameters. While the general approximation capabilities of the two models are similar it is worth pointing out that the learned ICNN does not lead the the staircasing artefact apparent in the TV reconstruction.}
  \label{fig_test2_direct_compare}
\end{figure}

\section{Conclusion}
This work has demonstrated that the promising inexact AD approach for computing hypergradients is equivalent to using the implicit function theorem.
This leads to a simple, unified framework for approximating hypergradients.
Our framework is flexible, with no specific requirements on solver choices and termination conditions, and is accompanied by standard linear convergence rates as well as new, computable a posteriori error bounds.
In practice these a posteriori bounds are also typically more accurate.
Importantly, our results also demonstrate that careful selection of the hypergradient approximation method is as important for bilevel optimization as choosing a lower-level solver.
Our results provide a promising foundation for building practical and rigorous bilevel optimization methods which will be addressed in future work.

\section*{Acknowledgments}

This work is supported in part by funds from EPSRC (EP/S026045/1, EP/T026693/1, EP/V026259/1) and the Leverhulme Trust (ECF-2019-478).

\bibliographystyle{unsrt}
\bibliography{refs_ima}

\appendix

\clearpage

\section{Extra Numerical Results} \label{sec_extra_numerics}
\figref{fig_quadratic_ad_comparison_by_epsilon} below shows the same results as \figref{fig_quadratic_ad_comparison}(c,d), but where $\t{x}$ is computed inexactly using $N$ iterations of heavy ball.

\begin{figure}[H]
  \centering
  \begin{subfigure}[b]{0.4\textwidth}
		\includegraphics[width=\textwidth]{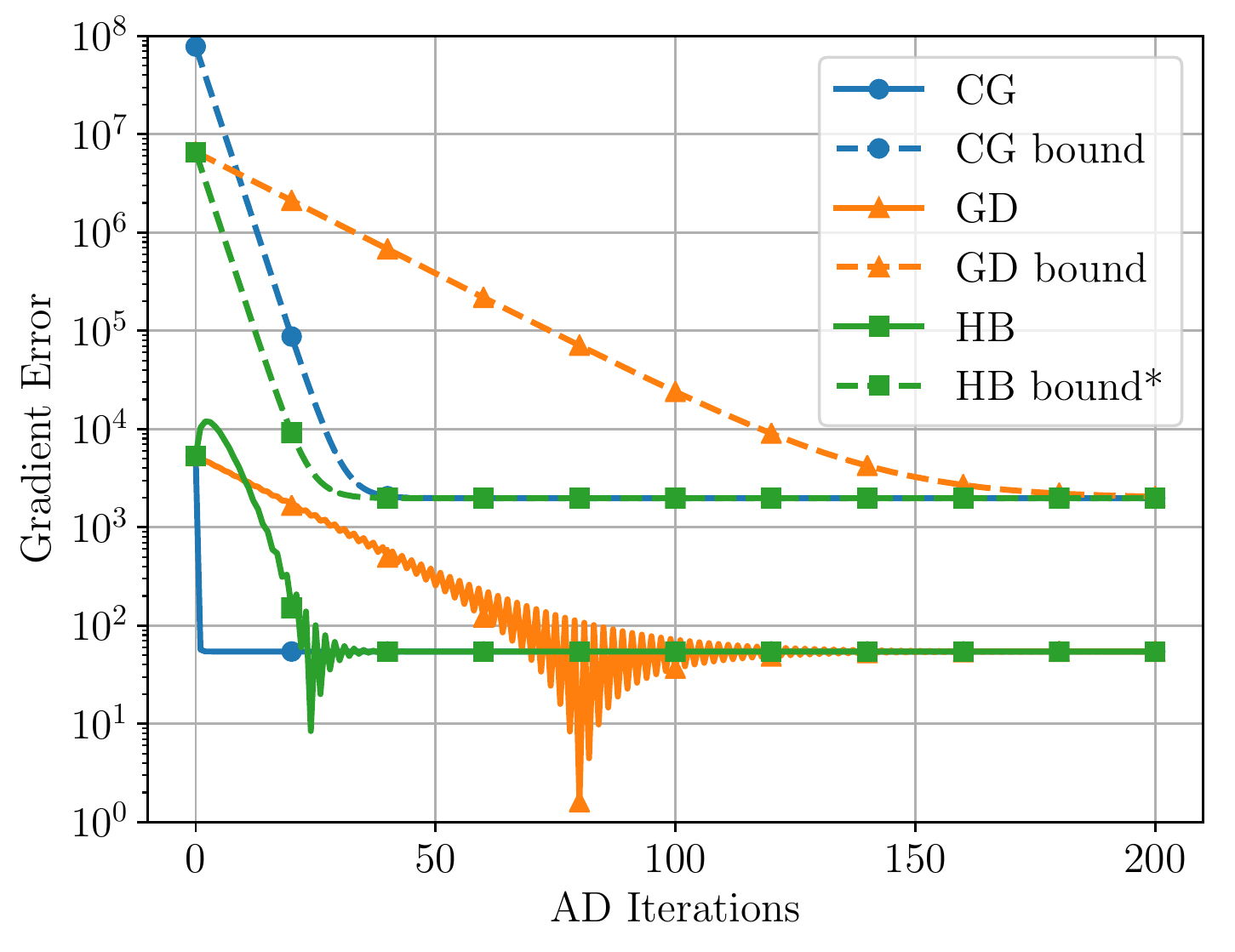}
		\caption{$N=20$ ($\varepsilon=1.1\times 10^{-2}$), a priori bounds}
	\end{subfigure}
	~
	\begin{subfigure}[b]{0.4\textwidth}
		\includegraphics[width=\textwidth]{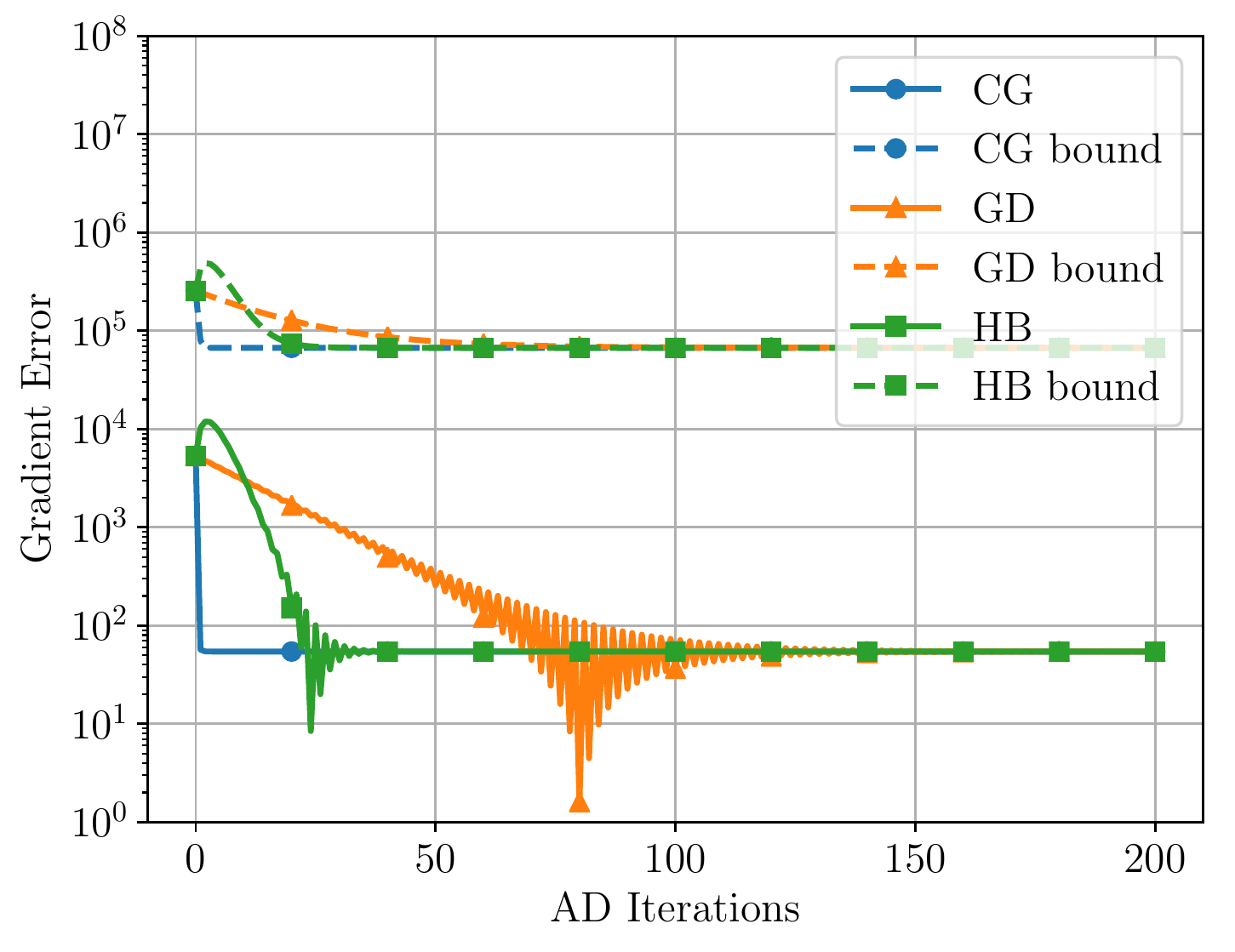}
		\caption{$N=20$ ($\varepsilon=1.1\times 10^{-2}$), a posteriori bounds}
	\end{subfigure}
	\\
	\begin{subfigure}[b]{0.4\textwidth}
		\includegraphics[width=\textwidth]{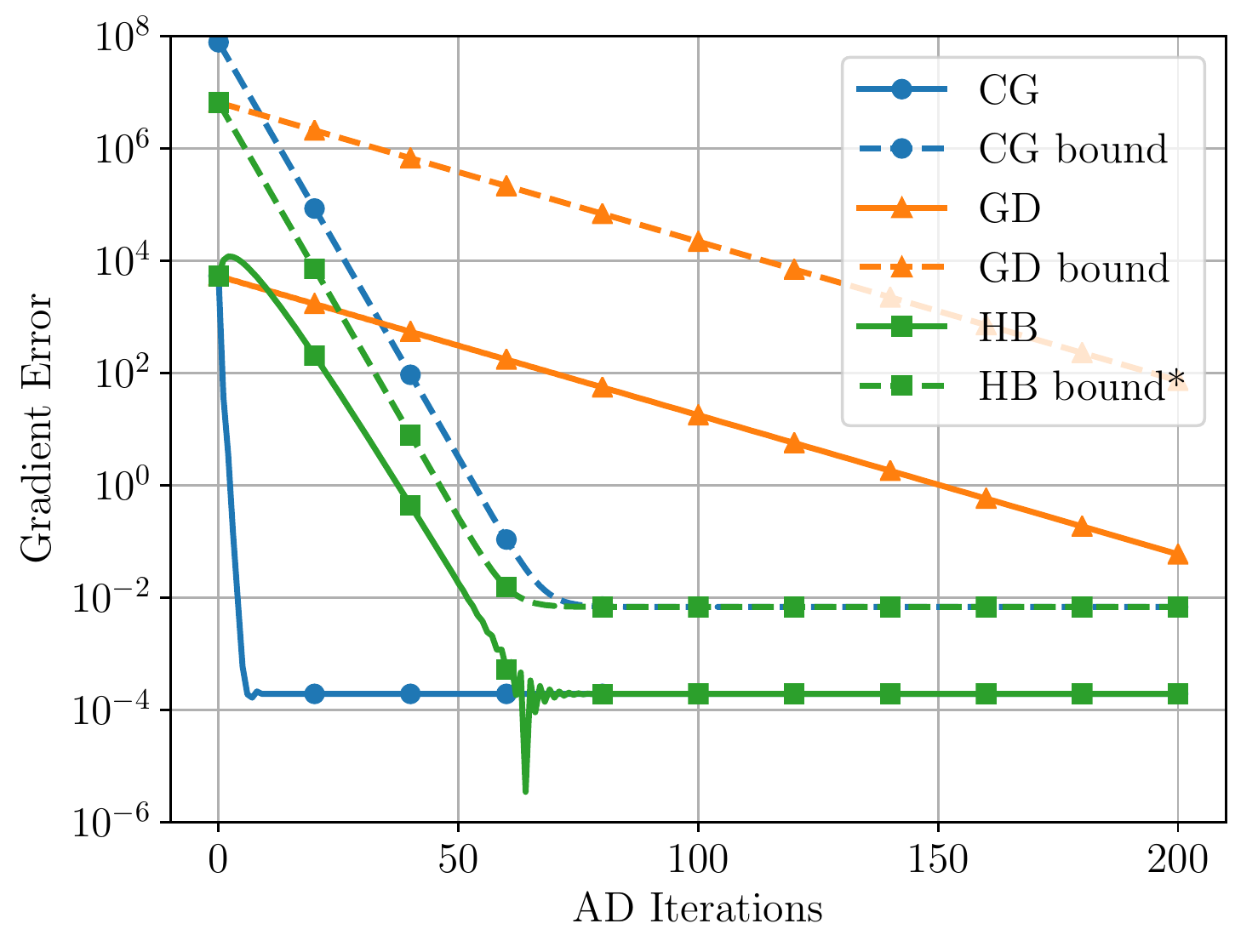}
		\caption{$N=60$ ($\varepsilon=3.9\times 10^{-8}$), a priori bounds}
	\end{subfigure}
	~
	\begin{subfigure}[b]{0.4\textwidth}
		\includegraphics[width=\textwidth]{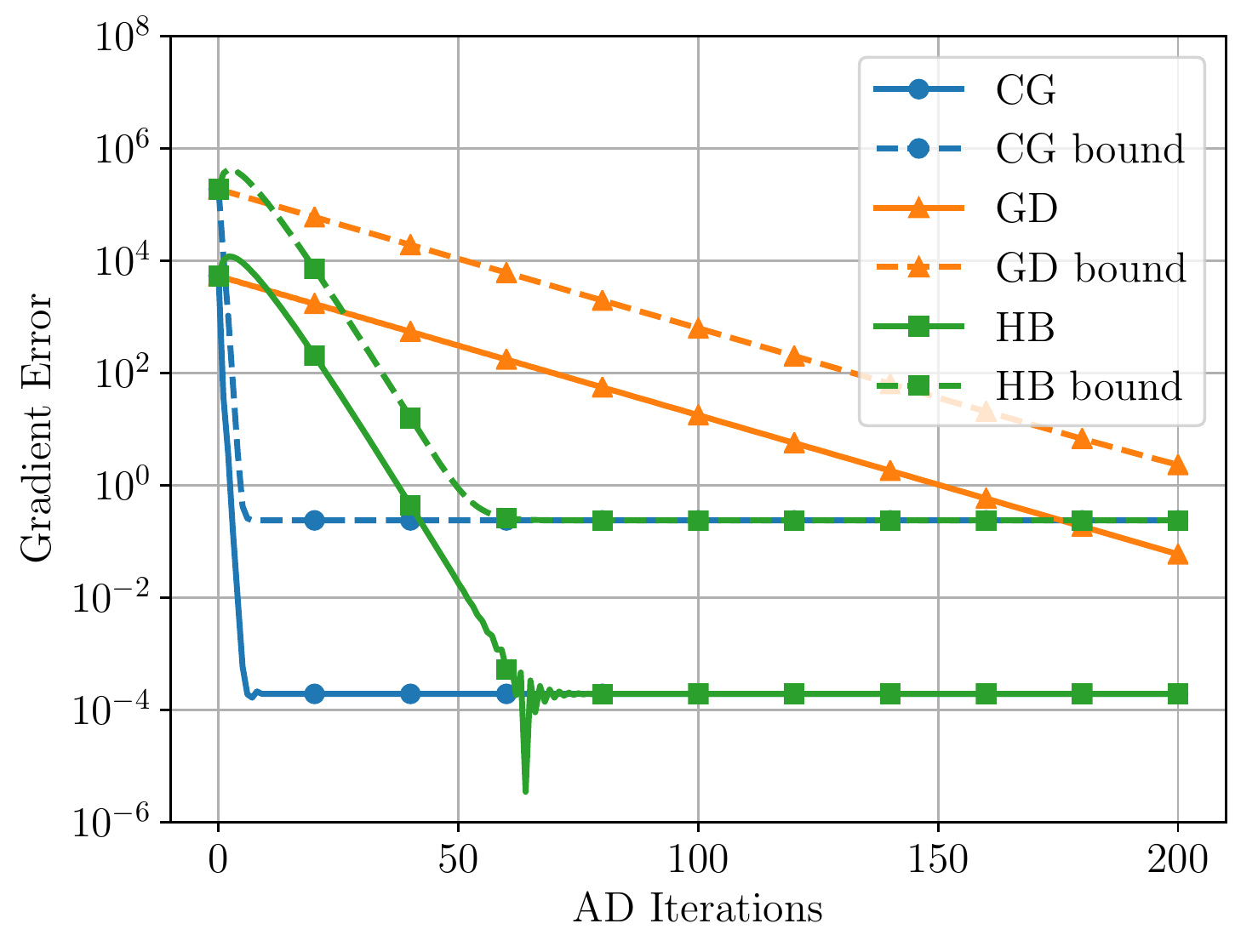}
		\caption{$N=60$ ($\varepsilon=3.9\times 10^{-8}$), a posteriori bounds}
	\end{subfigure}
	\\
	\begin{subfigure}[b]{0.4\textwidth}
		\includegraphics[width=\textwidth]{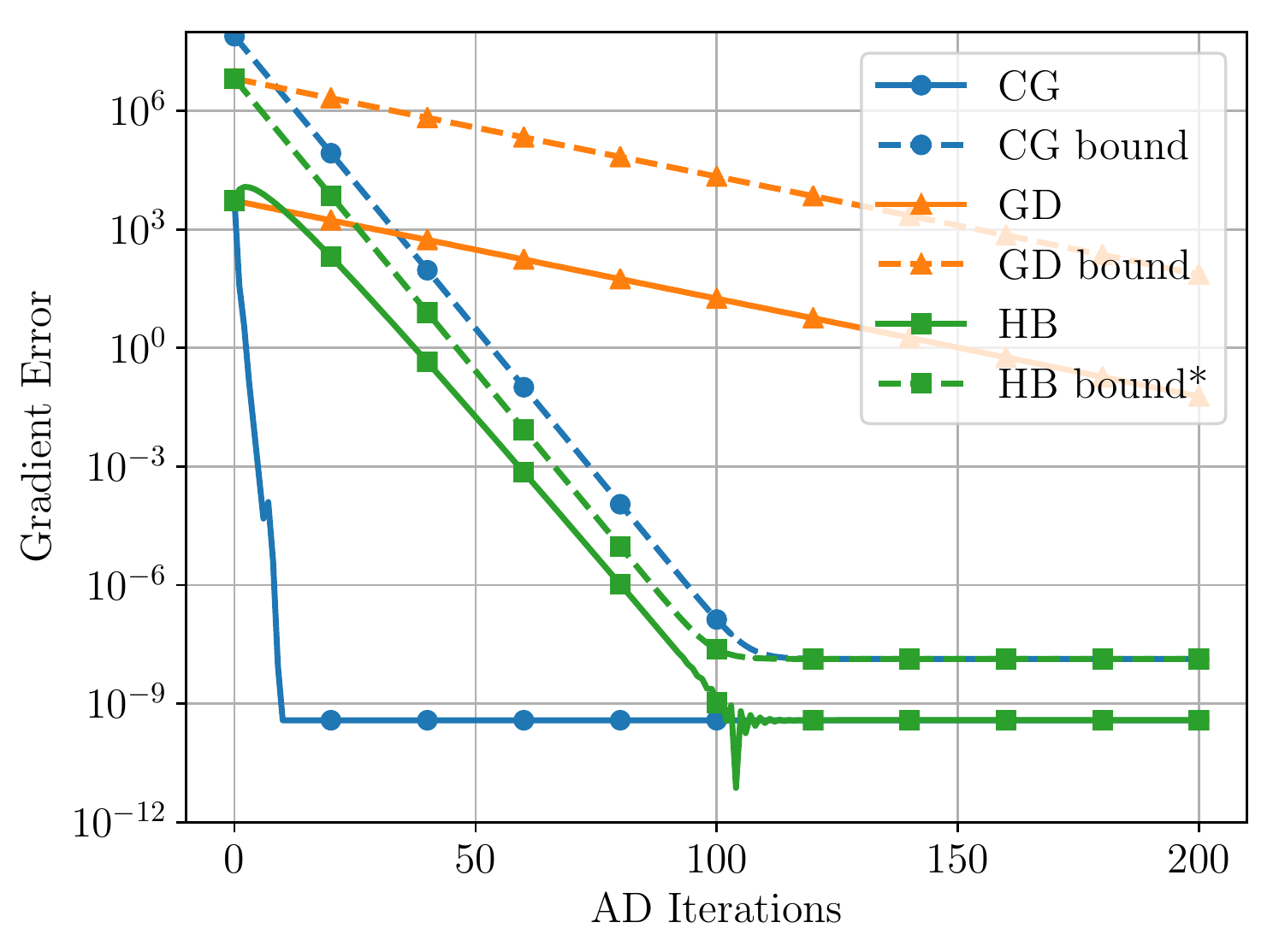}
		\caption{$N=100$ ($\varepsilon=7.7\times 10^{-14}$), a priori bounds}
	\end{subfigure}
	~
	\begin{subfigure}[b]{0.4\textwidth}
		\includegraphics[width=\textwidth]{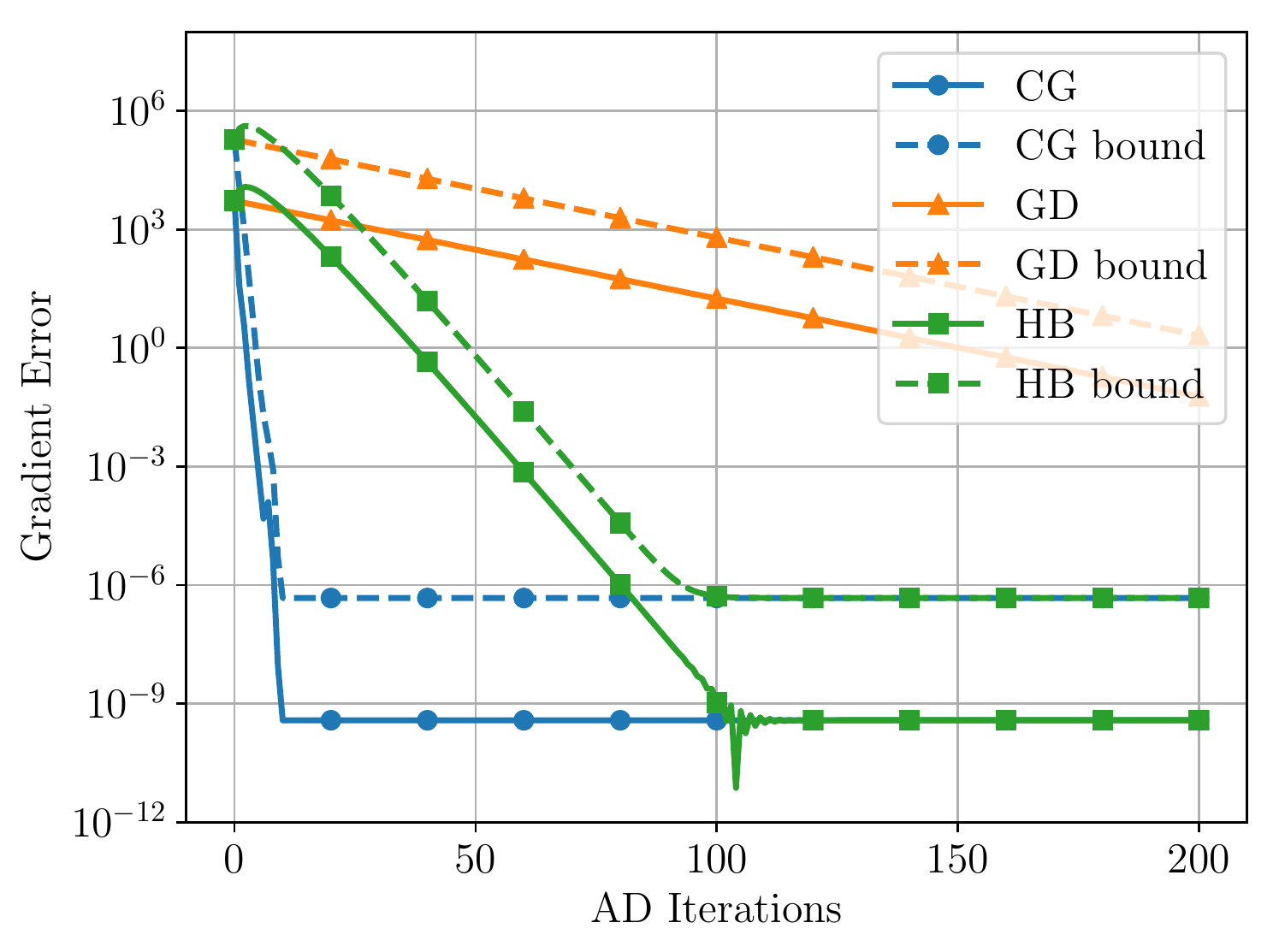}
		\caption{$N=100$ ($\varepsilon=7.7\times 10^{-14}$), a posteriori bounds}
	\end{subfigure}
  \caption{Simple quadratic AD comparison for different accuracy levels of lower-level solve ($\t{x}$ from $N$ iterations of heavy ball, yielding $\varepsilon$ as stated). *The a priori bound for heavy ball uses $c=1$ and $\gamma=0$ in \eqref{eq_hb_convergence_2}, but in reality these constants are not known and so there is no guarantee that this will actually be an upper bound on the error.}
  \label{fig_quadratic_ad_comparison_by_epsilon}
\end{figure}

\end{document}